\newtheorem{theorem}{Theorem}[section]
\newtheorem{corollary}[theorem]{Corollary}
\newtheorem{lemma}[theorem]{Lemma}
\newtheorem{proposition}[theorem]{Proposition}
\theoremstyle{definition}
\newtheorem{definition}[theorem]{Definition}
\newtheorem*{remark}{Remark}
\newtheorem{example}[theorem]{Example}
\newcommand{\dimb}{\mathrm{dim}_B}
\newcommand{\dimh}{\mathrm{dim}_H}
\newcommand{\calu}{\mathcal{U}}
\newcommand{\diam}{\mathrm{diam}}
\newcommand{\calv}{\mathcal{V}}
\title[Decay of Lebesgue Numbers]
      {Exponential Decay of Lebesgue Numbers}
\author[Peng Sun]{}
\subjclass{Primary: 37B40, 54F45.}
 \keywords{Lebesgue number, dimension theory, topological entropy.}
 \email{pengsunmath@hotmail.com}
\begin{document}

\maketitle

% Enter the first author's name and address:
\centerline{\scshape Peng Sun }
\medskip
{\footnotesize
% please put the address of the first author
 \centerline{China Economics and Management Academy}
   \centerline{Central University of Finance and Economics}
   \centerline{No. 39 College South Road, Beijing, 100081, China}
} % Do not forget to end the {\footnotesize by the sign }

\bigskip

%The abstract of your paper
\begin{abstract}
    We study the exponential rate of decay of Lebesgue numbers of open
    covers in topological dynamical systems. We show that topological
    entropy is bounded by this rate multiplied
    by dimension. Some
    corollaries and examples are discussed.
     
\end{abstract}
\section{Motivation}
Entropy, which measures complexity of a dynamical system, has various definitions
in both topological and measure-theoretical contexts. Most of these definitions
are closely related to each other. Given a partition on a measure space,
the famous Shannon-McMillan-Breiman Theorem asserts that for almost every
point the cell covering it, generated under dynamics, decays in measure with
the asymptotic exponential rate equal to the entropy. It is natural to consider
analogous objects in topological dynamics. Instead of measurable partitions,
the classical definition of topological entropy due to Adler, Konheim and
 McAndrew involves open covers, which also generate cells under dynamics.
We would not like to speak of any invariant
measure as in many cases they may be scarce or pathologic, offering us very
little information about the local geometric structure.
Diameters of cells are also useless since usually the image of a cell may spread to the
whole space. Finally we arrive at Lebesgue number. It measures how large
a ball around every point is contained in some cell. It is a global characteristic
but exhibits local facts, in which sense catches some
idea of Shannon-McMillan-Breiman Theorem. We also notice that the results
we obtained provides a good upper estimate of topological entropy which is
computable with reasonable effort.

\section{Preliminaries on Lebesgue number}

%\subsection{Lebesgue number}

First we briefly discuss some preliminaries on Lebesgue number and open covers.
Some of those can be found in any textbook of elementary topology. For
the rest, as well as other facts we discuss in succeeding sections without proof, one can refer to,
for example, \cite{PW}.
%Here we also present some byproducts we obtained while preparing this paper.

The basic result we shall use is the following Lebesgue Covering Lemma.

\begin{theorem}
Let $(X,d)$ be a compact metric space. $\mathcal{U}$ is an open cover of
$X$. Then there is $\delta>0$ such that every open ball of radius at most
$\delta$ is contained in some element of $\mathcal{U}$. We call the largest
such number the Lebesgue number of the open cover.
\end{theorem}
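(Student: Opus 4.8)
The plan is to manufacture the uniform radius $\delta$ out of the purely local fact that every point lies in some element of $\calu$ together with an open ball; compactness is precisely the ingredient that makes this ``local to global'' passage work. So I would begin by using compactness of $X$ to extract a finite subcover $U_1,\dots,U_n\in\calu$ with $X=U_1\cup\dots\cup U_n$ (if some $U_i=X$ then every ball already lies in an element and there is nothing to prove, so I may assume $X\setminus U_i\neq\emptyset$ for each $i$), and then obtain the constant by an extremal argument on this finite list.

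For the extremal step I would introduce the auxiliary function $f\colon X\to(0,\infty)$ given by $f(x)=\max_{1\le i\le n} d(x,X\setminus U_i)$. Each of the functions $x\mapsto d(x,X\setminus U_i)$ is $1$-Lipschitz, hence $f$ is continuous; and $f(x)>0$ for every $x$, since $x$ belongs to some open $U_i$ and therefore $d(x,X\setminus U_i)>0$. By compactness $f$ attains a positive minimum $\delta_0=\min_{x\in X}f(x)$, and I claim this $\delta_0$ is admissible: given $x\in X$, choose an index $i$ with $d(x,X\setminus U_i)=f(x)\ge\delta_0$; then any $y$ with $d(x,y)<\delta_0$ cannot lie in $X\setminus U_i$, so $B(x,\delta_0)\subseteq U_i$, and a fortiori $B(x,r)\subseteq U_i$ for every $r\le\delta_0$. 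This proves the existence of $\delta>0$ with the stated property.

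An alternative route I would keep in reserve, which bypasses the finite subcover, is a contradiction argument: if no radius worked, then for each $m\in\mathbb{N}$ there would be a point $x_m$ with $B(x_m,1/m)$ contained in no element of $\calu$; sequential compactness gives a subsequence $x_{m_k}\to x$, one picks $U\in\calu$ and $r>0$ with $B(x,2r)\subseteq U$, and for $k$ large both $d(x_{m_k},x)<r$ and $1/m_k<r$, so the triangle inequality yields $B(x_{m_k},1/m_k)\subseteq B(x,2r)\subseteq U$, contradicting the choice of $x_{m_k}$. Either way the only genuine content is the appeal to compactness, so I do not anticipate a real obstacle, only bookkeeping. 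The one point that deserves a word of care is the closing parenthetical: the set of admissible $\delta$ is an interval with left endpoint $0$ (shrinking an admissible radius keeps it admissible), so ``the Lebesgue number'' is unambiguously its supremum $\Lambda$; for a general infinite cover $\Lambda$ itself need not be admissible, but the $\delta_0$ produced above exhibits it as an attained maximum once one passes to a finite subcover, which is the form in which it will be used in the sequel.
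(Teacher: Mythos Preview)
Your argument is correct and follows essentially the same approach as the paper: define a continuous, strictly positive ``margin'' function on $X$ and invoke compactness to obtain a positive minimum. The only difference is cosmetic---the paper skips the reduction to a finite subcover and works directly with $\delta(\mathcal{U},x)=\sup_{U\in\mathcal{U}} d(x,X\setminus U)$ over the full cover; this pointwise quantity $\delta(\mathcal{U},x)$ is then reused in the proof of the join formula $\delta(\mathcal{U}\vee\mathcal{V})=\min\{\delta(\mathcal{U}),\delta(\mathcal{V})\}$, so it is worth having on record in that cover-independent form.
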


\begin{proof}
If $X\in\mathcal{U}$ then the theorem is trivial.

If $X\notin\mathcal{U}$, let
$$\delta(\mathcal{U},x)=\sup_{U\in\mathcal{U}}(\inf_{y\in X\backslash U}d(x,y)).$$
Then $\delta(\mathcal{U},x)$ is a continuous function on $X$ taking strictly
positive values. Since $X$ is compact, the function attains its minimum value
on $X$. So
$$\delta(\mathcal{U})=\min_{x\in X} \delta(\mathcal{U},x)>0$$
is the Lebesgue number of the open cover.
\end{proof}

\begin{remark}
Another widely used formulation of Lebesgue Covering Lemma states that there
is $\bar\delta>0$ (the largest one) such that every set of diameter less
than $\bar\delta$
is contained in some element of $\mathcal{U}$. It is easy to see $\delta\le\bar\delta\le
2\delta$. This guarantees that Definition \ref{Lentropy} is not affected
if Lebesgue number is defined this way.
\end{remark}

We have some simple facts on Lebesgue numbers.

\begin{lemma}\label{finer}
For two open covers $\mathcal{U}$ and $\mathcal{V}$, we say $\mathcal{U}$
is finer than $\mathcal{V}$, denoted by $\calu\succ\calv$, if every element of $\mathcal{U}$ is contained
in some element of $\mathcal{V}$. If $\mathcal{U}$ is finer than $\mathcal{V}$
 then $\delta(\mathcal{U})\le\delta(\mathcal{V})$.
\end{lemma}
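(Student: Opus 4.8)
The plan is to argue directly from the defining property of the Lebesgue number established in Theorem~2.1: $\delta(\calu)$ is the largest $\delta>0$ such that every open ball of radius at most $\delta$ is contained in some element of $\calu$. The one thing to exploit is that the relation $\calu\succ\calv$ transports this containment property from $\calu$ to $\calv$, so every $\delta$ that works for $\calu$ also works for $\calv$; the inequality is then immediate from maximality.

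Concretely, I would fix any $r$ with $0<r\le\delta(\calu)$ and an arbitrary open ball $B(x,r)\subseteq X$. By the definition of $\delta(\calu)$ there is $U\in\calu$ with $B(x,r)\subseteq U$. Since $\calu$ is finer than $\calv$, there is $V\in\calv$ with $U\subseteq V$, and hence $B(x,r)\subseteq V$. As $x$ and $r\le\delta(\calu)$ were arbitrary, every open ball of radius at most $\delta(\calu)$ is contained in some element of $\calv$. Because $\delta(\calv)$ is by definition the \emph{largest} number with this property for $\calv$, we conclude $\delta(\calu)\le\delta(\calv)$.

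I do not expect a genuine obstacle here; the only points to keep straight are the two degenerate cases noted in the proof of Theorem~2.1. If $X\notin\calu$, the argument above applies verbatim. If $X\in\calu$, then $\calu\succ\calv$ forces some $V\in\calv$ with $X\subseteq V$, whence $V=X\in\calv$, so both covers fall under the trivial case and the inequality holds vacuously (or trivially, under whatever convention assigns a value to $\delta$ in that case). Nothing in the proof uses the diameter formulation or the bounds $\delta\le\bar\delta\le 2\delta$ from the Remark, though the conclusion is of course consistent with it.
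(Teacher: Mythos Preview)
Your argument is correct and is exactly the natural one: the refinement relation lets you transport the ball-in-element containment from $\calu$ to $\calv$, and then maximality of $\delta(\calv)$ gives the inequality. The paper does not actually supply a proof of this lemma; it is listed among the ``simple facts on Lebesgue numbers'' stated without proof (with a pointer to \cite{PW}), so there is nothing to compare against beyond noting that your approach is the expected direct verification from the definition in Theorem~2.1.
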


\begin{lemma}\label{shk}
Let $\mathrm{diam}(\mathcal{U})=\sup_{U\in\mathcal{U}}\mathrm{diam}(U)$.
For every open covers $\calu$ and $\calv$, if $\diam(\calu)<\delta(\calv)$,
then $\calu\succ\calv$. 
\end{lemma}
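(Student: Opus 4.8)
The plan is to verify the defining property of $\calu\succ\calv$ directly: I must show that an arbitrary element $U\in\calu$ is contained in some element of $\calv$. The empty set is contained in any element, so I may assume $U\neq\emptyset$ and fix a point $x\in U$.

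The key observation is that $U$ is trapped inside a single open ball centered at $x$. For every $y\in U$ we have $d(x,y)\le\diam(U)\le\diam(\calu)<\delta(\calv)$, where the last inequality is exactly the hypothesis; hence $y\in B(x,\delta(\calv))$. Thus $U\subseteq B(x,\delta(\calv))$. I want to stress that the strictness of the hypothesis is precisely what is used here, since $\diam(U)$ may equal $\diam(\calu)$, and it is this strictness that puts every point of $U$ at distance \emph{strictly} less than $\delta(\calv)$ from $x$.

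Now I invoke the Lebesgue Covering Lemma (the Theorem above): since $B(x,\delta(\calv))$ is an open ball of radius at most $\delta(\calv)$, the Lebesgue number of $\calv$, it is contained in some $V\in\calv$. Combining the two inclusions gives $U\subseteq B(x,\delta(\calv))\subseteq V$. As $U\in\calu$ was arbitrary, every element of $\calu$ lies in some element of $\calv$, i.e. $\calu\succ\calv$.

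I do not expect any real obstacle; the one point to watch is the strict inequality in the hypothesis, which is what lets a point of $U$ sit strictly inside the ball of radius $\delta(\calv)$ and keeps the open-ball formulation of the Lebesgue lemma applicable. If one preferred not to rely on radius exactly $\delta(\calv)$ being admissible, one could instead pick any $r$ with $\diam(\calu)<r\le\delta(\calv)$ and run the identical argument with $B(x,r)$ in place of $B(x,\delta(\calv))$.
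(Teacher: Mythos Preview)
Your proof is correct and is the natural direct argument. The paper does not actually supply a proof of this lemma; it is listed among the ``simple facts on Lebesgue numbers'' for which the reader is referred to \cite{PW}, so there is nothing to compare against beyond noting that your approach is exactly the standard one.
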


\begin{lemma}
For any two open covers $\mathcal{U}$ and $\mathcal{V}$, let
$$\mathcal{U}\bigvee\mathcal{V}=\{U\cap
V|U\in\mathcal{U},V\in\mathcal{V}\}.$$ Then
$$\delta(\mathcal{U}\bigvee\mathcal{V})=\min\{\delta(\mathcal{U}),
\delta(\mathcal{V})\}.$$
\end{lemma}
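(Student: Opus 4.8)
The plan is to prove the two inequalities separately. For the upper bound, I would first observe that $\mathcal{U}\bigvee\mathcal{V}$ refines both $\mathcal{U}$ and $\mathcal{V}$: every member $U\cap V$ satisfies $U\cap V\subseteq U\in\mathcal{U}$ and $U\cap V\subseteq V\in\mathcal{V}$, so $\mathcal{U}\bigvee\mathcal{V}\succ\mathcal{U}$ and $\mathcal{U}\bigvee\mathcal{V}\succ\mathcal{V}$. Applying Lemma \ref{finer} twice then yields $\delta(\mathcal{U}\bigvee\mathcal{V})\le\delta(\mathcal{U})$ and $\delta(\mathcal{U}\bigvee\mathcal{V})\le\delta(\mathcal{V})$, hence $\delta(\mathcal{U}\bigvee\mathcal{V})\le\min\{\delta(\mathcal{U}),\delta(\mathcal{V})\}$.

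For the reverse inequality, set $\delta=\min\{\delta(\mathcal{U}),\delta(\mathcal{V})\}$ and let $B$ be an arbitrary open ball of radius at most $\delta$. Since the radius of $B$ is at most $\delta(\mathcal{U})$, the defining property of the Lebesgue number of $\mathcal{U}$ furnishes some $U\in\mathcal{U}$ with $B\subseteq U$; likewise, since the radius is at most $\delta(\mathcal{V})$, there is some $V\in\mathcal{V}$ with $B\subseteq V$. Then $B\subseteq U\cap V$, and $U\cap V\in\mathcal{U}\bigvee\mathcal{V}$, so $B$ is contained in a member of $\mathcal{U}\bigvee\mathcal{V}$. As $B$ was arbitrary, $\delta$ is an admissible radius for $\mathcal{U}\bigvee\mathcal{V}$, whence $\delta(\mathcal{U}\bigvee\mathcal{V})\ge\delta$. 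Combining the two bounds gives the claimed equality. I would also remark in passing that $\mathcal{U}\bigvee\mathcal{V}$ is genuinely an open cover (each point lies in some $U$ and some $V$, hence in $U\cap V$), so that its Lebesgue number is well defined by Theorem~1; possibly empty intersections $U\cap V$ cause no trouble.

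The argument is essentially routine, so I do not expect a real obstacle. The only points deserving a little care are using the \emph{same} ball $B$ to extract $U$ and $V$ simultaneously, and remembering that the Lebesgue number is the \emph{largest} number that works, so that ``radius at most $\delta(\mathcal{U})$'' is exactly the hypothesis one is entitled to invoke. If one prefers, the whole proof can be rephrased using the diameter formulation from the Remark, but this adds nothing and the ball formulation is cleaner here.
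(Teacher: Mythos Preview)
Your proof is correct and follows essentially the same approach as the paper: both halves match, with the upper bound via refinement and Lemma~\ref{finer}, and the lower bound by finding, for each point (equivalently, each small ball), a $U\in\mathcal{U}$ and a $V\in\mathcal{V}$ containing it and intersecting them. The only cosmetic difference is that the paper phrases the lower bound using the pointwise function $\delta(\mathcal{U},x)$ from the proof of the Lebesgue Covering Lemma, while you work directly with balls; the content is the same.
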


\begin{proof}
On one hand, $\mathcal{U}\bigvee\mathcal{V}$ is finer than $\mathcal{U}$
and $\mathcal{V}$. By Proposition \ref{finer}
$$\delta(\mathcal{U}\bigvee\mathcal{V})\le\min\{\delta(\mathcal{U}),
\delta(\mathcal{V})\}$$
On the other hand, for every $x$, there are $U_x\in\mathcal{U}$ and $V_x\in\mathcal{V}$
such that
$$\delta(\mathcal{U},x)=\min_{y\in X\backslash U_x} d(x,y)\ge\delta(\mathcal{U}),
\;\delta(\mathcal{V},x)=\min_{y\in X\backslash V_x} d(x,y)\ge\delta(\mathcal{V})$$
Then $$\delta(\mathcal{U}\bigvee\mathcal{V},x)\ge\min_{y\in X\backslash (U_x\cap
V_x)} d(x,y)=\min\{\delta(\mathcal{U},x),\delta(\mathcal{V},x)\}
\ge\min\{\delta(\mathcal{U}),
\delta(\mathcal{V})\}$$
\end{proof}

Now let $f$ be a continuous map from $X$ to itself.
%For convenience, we assume that $f$ is surjective.
Let
$$\mathcal{U}_f^n=\bigvee_{k=0}^{n-1} f^{-k}(\mathcal{U}),\;\delta_n=\delta_n(f, \mathcal{U})=\delta(\mathcal{U}_f^n)$$
where $f(\mathcal{U})=\{f(U)|U\in\mathcal{U}\}$.

\begin{corollary}\label{mindel}
Let $\mathcal{U}$ be an open cover, then
$$\delta_n(f,\mathcal{U})=\min_{0\le k\le n-1}\delta(f^{-k}(\mathcal{U})).$$
\end{corollary}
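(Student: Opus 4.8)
The plan is to obtain this as an immediate consequence of the preceding lemma on the Lebesgue number of a pairwise join, $\delta(\mathcal{U}\bigvee\mathcal{V})=\min\{\delta(\mathcal{U}),\delta(\mathcal{V})\}$, promoted to a finite join by induction on $n$. First I would record the trivial base case $n=1$: since $f^{-0}(\mathcal{U})=\mathcal{U}$, we have $\mathcal{U}_f^1=\mathcal{U}$ and hence $\delta_1(f,\mathcal{U})=\delta(\mathcal{U})=\min_{0\le k\le 0}\delta(f^{-k}(\mathcal{U}))$.

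For the inductive step, the key observation is that the join operation on families of sets is associative: the family $\bigvee_{k=0}^{n-1}f^{-k}(\mathcal{U})$ and the family $\bigl(\bigvee_{k=0}^{n-2}f^{-k}(\mathcal{U})\bigr)\bigvee f^{-(n-1)}(\mathcal{U})$ both consist of exactly the sets of the form $\bigcap_{k=0}^{n-1}f^{-k}(U_k)$ with each $U_k\in\mathcal{U}$. Thus $\mathcal{U}_f^n=\mathcal{U}_f^{n-1}\bigvee f^{-(n-1)}(\mathcal{U})$. Here one should note, as a routine remark, that $\mathcal{U}_f^{n-1}$ and $f^{-(n-1)}(\mathcal{U})$ are genuinely open covers of $X$ — the preimage of an open cover under the continuous map $f^{n-1}$ is an open cover, and a finite join of open covers is an open cover — so that the preceding lemma applies.

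Applying that lemma to the two open covers $\mathcal{U}_f^{n-1}$ and $f^{-(n-1)}(\mathcal{U})$ gives
$$\delta_n(f,\mathcal{U})=\delta(\mathcal{U}_f^n)=\min\{\delta(\mathcal{U}_f^{n-1}),\,\delta(f^{-(n-1)}(\mathcal{U}))\}=\min\{\delta_{n-1}(f,\mathcal{U}),\,\delta(f^{-(n-1)}(\mathcal{U}))\}.$$
By the inductive hypothesis $\delta_{n-1}(f,\mathcal{U})=\min_{0\le k\le n-2}\delta(f^{-k}(\mathcal{U}))$, and substituting this into the display yields $\delta_n(f,\mathcal{U})=\min_{0\le k\le n-1}\delta(f^{-k}(\mathcal{U}))$, completing the induction. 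I do not anticipate any genuine obstacle in this argument; the only point requiring a moment's care is spelling out the associativity of $\bigvee$ so that the binary lemma can legitimately be iterated.
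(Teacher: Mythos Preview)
Your argument is correct and is exactly the intended one: the paper states this as a corollary of the preceding lemma $\delta(\mathcal{U}\bigvee\mathcal{V})=\min\{\delta(\mathcal{U}),\delta(\mathcal{V})\}$ without further proof, and your induction spelling out the iteration of that lemma over the join $\bigvee_{k=0}^{n-1}f^{-k}(\mathcal{U})$ is precisely what is being left to the reader.
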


\begin{corollary}\label{finless}
If $\calu\succ\calv$, then for every $n$, we have $f^{-n}(\calu)\succ f^{-n}(\calv)$
and $\calu_f^n\succ\calv_f^n$, hence $\delta(f^{-n}(\calu))\le \delta(f^{-n}(\calv))$
and $\delta_n(f,\calu)\le\delta_n(f,\calv)$.
\end{corollary}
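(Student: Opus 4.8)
The plan is to verify the two refinement relations directly from the definition of $\succ$, and then read off the inequalities for Lebesgue numbers as an immediate consequence of Lemma \ref{finer}.

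First I would handle $f^{-n}(\calu)\succ f^{-n}(\calv)$. A typical element of $f^{-n}(\calu)$ has the form $f^{-n}(U)$ with $U\in\calu$. Since $\calu\succ\calv$, there is $V\in\calv$ with $U\subseteq V$; taking preimages preserves inclusions, so $f^{-n}(U)\subseteq f^{-n}(V)$, and $f^{-n}(V)\in f^{-n}(\calv)$. Hence every element of $f^{-n}(\calu)$ lies in some element of $f^{-n}(\calv)$, which is exactly the assertion $f^{-n}(\calu)\succ f^{-n}(\calv)$.

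Next I would treat the join. The key elementary fact is that $\succ$ is compatible with $\bigvee$: if $\calu_k\succ\calv_k$ for $k=0,\dots,n-1$, then $\bigvee_{k}\calu_k\succ\bigvee_{k}\calv_k$. Indeed, a typical element of $\bigvee_{k}\calu_k$ is $\bigcap_{k=0}^{n-1}U_k$ with $U_k\in\calu_k$; choosing $V_k\in\calv_k$ with $U_k\subseteq V_k$ gives $\bigcap_{k}U_k\subseteq\bigcap_{k}V_k\in\bigvee_{k}\calv_k$. Applying the first step to each $k\in\{0,\dots,n-1\}$ gives $f^{-k}(\calu)\succ f^{-k}(\calv)$, and then the compatibility of $\succ$ with $\bigvee$ yields $\calu_f^n=\bigvee_{k=0}^{n-1}f^{-k}(\calu)\succ\bigvee_{k=0}^{n-1}f^{-k}(\calv)=\calv_f^n$.

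Finally, both inequalities $\delta(f^{-n}(\calu))\le\delta(f^{-n}(\calv))$ and $\delta_n(f,\calu)\le\delta_n(f,\calv)$ follow by applying Lemma \ref{finer} to the two refinement relations just established. I do not expect any genuine obstacle: the only points needing a moment of care are that preimages and finite intersections both preserve set inclusions, and that a generic element of a join is an intersection of one element drawn from each factor — the rest is bookkeeping.
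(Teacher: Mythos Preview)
Your argument is correct and is exactly the routine verification the paper leaves implicit: the corollary is stated without proof, and the intended justification is precisely that preimages and finite intersections preserve inclusion, so refinement passes to $f^{-n}(\cdot)$ and to joins, after which Lemma~\ref{finer} gives the Lebesgue-number inequalities.
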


\section{Decay of Lebesgue numbers}

Now we turn to the asymptotic decay of Lebesgue numbers.

\begin{definition}\label{Lentropy}
Let $\mathcal{U}$ be an open cover of $X$. We set
$$h_L^-(f,\mathcal{U})=\liminf_{n\to\infty}-\frac1n\log\delta_n(f,\mathcal{U}),$$
$$h_L^+(f,\mathcal{U})=\limsup_{n\to\infty}-\frac1n\log\delta_n(f,\mathcal{U}),$$
$$h_L^-(f)=\sup h_L^-(f,\mathcal{U})$$
%$$h_L^*(f)=\lim_{\epsilon\to 0}\inf_{\mathrm{diam}(\mathcal{U})<\epsilon}h_L^-(f,\mathcal{U})$$
and
$$h_L^+(f)=\sup h_L^+(f,\mathcal{U}).$$
Here the supremums are taken over all finite open covers.

\end{definition}

From now on we use $h_L^*$ to denote either $h_L^+$ or $h_L^-$, when the argument works for both cases. We note that these numbers possess some properties
analogous to entropy.

\begin{lemma}
For every continuous map $f$ and every open cover $\calu$, we have:
\begin{enumerate}
\item $h_L^*(f,\calu)\ge 0$, hence $h_L^*(f)\ge 0$.
\item If $f$ is an isometry, then $h_L^*(f)=h_L^*(f,\calu)=0$.
\item $h_L^-(f,\calu)\le h_L^+(f,\calu)$, hence $h_L^-(f)\le h_L^+(f)$.
\item For every $n>0$, $h_L^*(f,\calu)=h_L^*(f,f^{-n}(\calu))=h_L^*(f,\calu_f^n)$.
If in addition $f$ is a homeomorphism, then the first equality also holds
for $n<0$.
\item If $\calu\succ\calv$, then $h_L^*(f,\calu)\ge h_L^*(f,\calv)$.
\end{enumerate}
\end{lemma}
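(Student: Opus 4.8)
The plan is to verify the five items one at a time, using only the material of Section~2: the refinement $\calu_f^n\succ\calu$ together with Lemma~\ref{finer} gives $\delta_n(f,\calu)\le\delta(\calu)$ for all $n$; Corollary~\ref{mindel} (and the join formula $\delta(\bigvee_i\calw_i)=\min_i\delta(\calw_i)$ behind it) turns Lebesgue numbers of iterated covers into minima; and Corollary~\ref{finless} propagates $\succ$ through $f^{-k}$ and through $\delta_n$.

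Items (1), (3) and (5) are immediate. From $\delta_n(f,\calu)\le\delta(\calu)$ we get $-\tfrac1n\log\delta_n(f,\calu)\ge-\tfrac1n\log\delta(\calu)\to0$, so the $\liminf$ and $\limsup$ are $\ge0$, and the supremum is then $\ge0$. For (3), $\liminf\le\limsup$ pointwise in $\calu$, and since $h_L^+(f)$ dominates every $h_L^-(f,\calu)$ it dominates their supremum. For (5), Corollary~\ref{finless} gives $\delta_n(f,\calu)\le\delta_n(f,\calv)$, hence $-\tfrac1n\log\delta_n(f,\calu)\ge-\tfrac1n\log\delta_n(f,\calv)$, and passing to the limits preserves the inequality. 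For (2), when $f$ is an isometry and $B(f(x),r)\subseteq U$ then $f(B(x,r))\subseteq B(f(x),r)\subseteq U$, so $B(x,r)\subseteq f^{-1}(U)$; this gives $\delta(f^{-1}(\calu),x)\ge\delta(\calu,f(x))$, hence $\delta(f^{-1}(\calu))\ge\delta(\calu)$, and inductively $\delta(f^{-k}(\calu))\ge\delta(\calu)$; Corollary~\ref{mindel} then forces $\delta_n(f,\calu)=\delta(\calu)$ for all $n$, so the rate vanishes for every cover and $h_L^*(f)=0$.

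The real content is (4). Compute the iterated joins: $(\calu_f^n)_f^m=\bigvee_{k=0}^{m-1}\bigvee_{l=0}^{n-1}f^{-(k+l)}(\calu)$, whose Lebesgue number is $\min_{0\le j\le m+n-2}\delta(f^{-j}(\calu))=\delta_{m+n-1}(f,\calu)$; and $(f^{-n}(\calu))_f^m=\bigvee_{j=n}^{m+n-1}f^{-j}(\calu)$, whose Lebesgue number is $\min_{n\le j\le m+n-1}\delta(f^{-j}(\calu))$. Hence $\delta_m(f,\calu_f^n)=\delta_{m+n-1}(f,\calu)$ and $\delta_{m+n}(f,\calu)=\min\{\delta_n(f,\calu),\,\delta_m(f,f^{-n}(\calu))\}$. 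For the first identity, writing $-\tfrac1m\log\delta_{m+n-1}(f,\calu)=\tfrac{m+n-1}{m}\bigl(-\tfrac1{m+n-1}\log\delta_{m+n-1}(f,\calu)\bigr)$ and using $\tfrac{m+n-1}{m}\to1$ (while $m+n-1$ exhausts all large integers) shows $\liminf$ and $\limsup$ are unchanged, so $h_L^*(f,\calu_f^n)=h_L^*(f,\calu)$. For $h_L^*(f,f^{-n}(\calu))=h_L^*(f,\calu)$, use that $\delta_n(f,\calu)$ is a positive constant and $m\mapsto\delta_m(f,f^{-n}(\calu))$ is non-increasing: either it stays above $\delta_n(f,\calu)$ for all $m$, in which case it converges to a positive limit and, by the displayed relation, $\delta_{m+n}(f,\calu)$ is eventually constant, so both rates are $0$; or eventually $\delta_{m+n}(f,\calu)=\delta_m(f,f^{-n}(\calu))$, and again the factor $\tfrac{m}{m+n}\to1$ equates the two rates. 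Finally, for a homeomorphism and $n<0$ put $m=-n>0$; applying the case just proved to the open cover $f^m(\calu)$ gives $h_L^*(f,f^{-m}(f^m(\calu)))=h_L^*(f,f^m(\calu))$, and $f^{-m}(f^m(\calu))=\calu$, whence $h_L^*(f,\calu)=h_L^*(f,f^{-n}(\calu))$.

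I expect the only genuine obstacle to lie in (4): keeping the index bookkeeping of the iterated joins straight, and handling the borderline case of $h_L^*(f,f^{-n}(\calu))=h_L^*(f,\calu)$ in which $\delta_m(f,f^{-n}(\calu))$ never drops below the constant $\delta_n(f,\calu)$. Apart from that, everything reduces to the lemmas of Section~2 together with the elementary fact that multiplying a sequence by a factor tending to $1$ leaves its $\liminf$ and $\limsup$ unchanged.
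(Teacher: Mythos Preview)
Your proof is correct. The paper states this lemma without proof, treating all five items as routine consequences of the material in Section~2; your write-up fills in exactly those details, using Lemma~\ref{finer}, Corollary~\ref{mindel}, and Corollary~\ref{finless} in the expected way.

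The only place where any real care is needed is item~(4), and you handle it cleanly. The identity $(\calu_f^n)_f^m=\calu_f^{m+n-1}$ gives $\delta_m(f,\calu_f^n)=\delta_{m+n-1}(f,\calu)$ at once, and the rescaling factor $\tfrac{m+n-1}{m}\to1$ finishes that case. For $f^{-n}(\calu)$ your dichotomy---either $\delta_m(f,f^{-n}(\calu))$ is bounded below by the fixed positive number $\delta_n(f,\calu)$ (so both rates vanish), or it eventually drops below and $\delta_{m+n}(f,\calu)=\delta_m(f,f^{-n}(\calu))$ thereafter---is exhaustive because $m\mapsto\delta_m(f,f^{-n}(\calu))$ is non-increasing. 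The reduction of the homeomorphism case $n<0$ to the case $n>0$ via the open cover $f^{-n}(\calu)$ is also correct.

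One cosmetic remark: in item~(2) you only need $\delta(f^{-k}(\calu))\ge\delta(\calu)$, not equality, since Corollary~\ref{mindel} then gives $\delta_n(f,\calu)=\min_{0\le k\le n-1}\delta(f^{-k}(\calu))=\delta(\calu)$ because the minimum is attained at $k=0$; you state this correctly, I just emphasize that surjectivity of the isometry is nowhere used.
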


\begin{proposition}\label{multi}
For every $n>0$, $h_L^*(f^n)=n h_L^*(f)$.
\end{proposition}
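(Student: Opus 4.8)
\section*{Proof proposal for Proposition \ref{multi}}

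The plan is to compare, for a fixed finite open cover $\mathcal{U}$, the iterated refinements formed under $f^n$ with those formed under $f$. Two elementary observations drive the argument. First, put $\mathcal{V}=\mathcal{U}_f^n=\bigvee_{k=0}^{n-1}f^{-k}(\mathcal{U})$. Since taking preimages commutes with finite intersections, reindexing gives
$$\mathcal{V}_{f^n}^m=\bigvee_{j=0}^{m-1}(f^n)^{-j}(\mathcal{V})=\bigvee_{j=0}^{m-1}\bigvee_{k=0}^{n-1}f^{-(nj+k)}(\mathcal{U})=\bigvee_{i=0}^{nm-1}f^{-i}(\mathcal{U})=\mathcal{U}_f^{nm},$$
because $(j,k)\mapsto nj+k$ is a bijection of $\{0,\dots,m-1\}\times\{0,\dots,n-1\}$ onto $\{0,\dots,nm-1\}$; hence $\delta_m(f^n,\mathcal{V})=\delta_{nm}(f,\mathcal{U})$. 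Second, the index set $\{0,n,2n,\dots,(m-1)n\}$ occurring in $\mathcal{U}_{f^n}^m$ is contained in $\{0,1,\dots,nm-1\}$, so $\mathcal{U}_f^{nm}\succ\mathcal{U}_{f^n}^m$ and Lemma \ref{finer} yields $\delta_{nm}(f,\mathcal{U})\le\delta_m(f^n,\mathcal{U})$.

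Next I would observe that the exponential decay rate of $\delta_N(f,\mathcal{U})$ is unchanged when $N$ is restricted to multiples of $n$. By Corollary \ref{mindel} the sequence $N\mapsto\delta_N(f,\mathcal{U})$ is non-increasing, so for $nm\le N<n(m+1)$ one has $\delta_{n(m+1)}(f,\mathcal{U})\le\delta_N(f,\mathcal{U})\le\delta_{nm}(f,\mathcal{U})$; since $N/(nm)\to1$, a routine estimate shows that the $\liminf$ (resp.\ $\limsup$) over $m$ of $-\frac1{nm}\log\delta_{nm}(f,\mathcal{U})$ equals $h_L^-(f,\mathcal{U})$ (resp.\ $h_L^+(f,\mathcal{U})$). (If $\delta_N(f,\mathcal{U})$ does not tend to $0$, both sides are $0$ and the identity is immediate; otherwise $-\log\delta_N$ is eventually positive, which makes the estimate clean.)

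Finally I would combine the pieces. Using $\delta_m(f^n,\mathcal{V})=\delta_{nm}(f,\mathcal{U})$ we have $-\frac1m\log\delta_m(f^n,\mathcal{V})=n\bigl(-\frac1{nm}\log\delta_{nm}(f,\mathcal{U})\bigr)$, so passing to the relevant $\liminf$ or $\limsup$ and applying the previous step, $h_L^*(f^n,\mathcal{V})=n\,h_L^*(f,\mathcal{U})$; taking the supremum over all finite covers $\mathcal{U}$ and noting that the covers $\mathcal{V}=\mathcal{U}_f^n$ form a subfamily of all finite open covers gives $h_L^*(f^n)\ge n\,h_L^*(f)$. Conversely, $\delta_{nm}(f,\mathcal{U})\le\delta_m(f^n,\mathcal{U})$ gives the termwise bound $-\frac1m\log\delta_m(f^n,\mathcal{U})\le n\bigl(-\frac1{nm}\log\delta_{nm}(f,\mathcal{U})\bigr)$, and passing to $\liminf$ or $\limsup$ yields $h_L^*(f^n,\mathcal{U})\le n\,h_L^*(f,\mathcal{U})\le n\,h_L^*(f)$ for every $\mathcal{U}$, whence $h_L^*(f^n)\le n\,h_L^*(f)$. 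The main obstacle is the middle step --- verifying that replacing the full sequence by the subsequence of multiples of $n$ does not change the $\liminf$/$\limsup$: for $\limsup$ one inequality is automatic and the reverse uses monotonicity of $\delta_N$, and symmetrically for $\liminf$; everything else is bookkeeping with $\bigvee$ and Lemma \ref{finer}.
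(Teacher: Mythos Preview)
Your proof is correct and follows essentially the same route as the paper. Both arguments rest on the same two facts: the inequality $\delta_{nm}(f,\mathcal{U})\le\delta_m(f^n,\mathcal{U})$ and the equality $\delta_m(f^n,\mathcal{U}_f^n)=\delta_{nm}(f,\mathcal{U})$; the paper derives these from Corollary~\ref{mindel} (the $\min$ formula for $\delta_n$) while you obtain them directly from the refinement $\mathcal{U}_f^{nm}\succ\mathcal{U}_{f^n}^m$ and the identity $(\mathcal{U}_f^n)_{f^n}^m=\mathcal{U}_f^{nm}$ together with Lemma~\ref{finer}, which amounts to the same thing. Your treatment of the passage from the subsequence $\{nm\}$ to the full sequence via monotonicity of $\delta_N$ is in fact more careful than the paper's, which simply writes ``taking limit'' and leaves that step implicit.
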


\begin{proof}
On one hand, for every finite open cover $\mathcal{U}$ and $m>0$, by Corollary
\ref{mindel}
$$\delta_m(f^n,\mathcal{U})=\min_{0\le j\le m-1}\delta(f^{-jn}(\mathcal{U}))\ge\min_{0\le
j\le mn-1}\delta(f^{-j}(\mathcal{U}))=\delta_{mn}(f,\mathcal{U}).$$
Taking limit we obtain $h_L^*(f^n,\mathcal{U})\le n h_L^*(f,\mathcal{U})$,
hence $h_L^*(f^n)\le n h_L^*(f)$.

On the other hand,
$$\delta_{m}(f^n,\mathcal{U}_f^n)=\min_{0\le
j\le m-1}\delta(f^{-jn}(\mathcal{U}_f^n))=\min_{0\le
j\le mn-1}\delta(f^{-j}(\mathcal{U}))=\delta_{mn}(f,\mathcal{U}).$$
This implies $h_L^*(f^n,\mathcal{U}_f^n)\ge n h_L^*(f,\mathcal{U})$.
So $h_L^*(f^n)\ge n h_L^*(f)$.
\end{proof}

Applying Corollary \ref{mindel}, we also have:

\begin{corollary}\label{nine}
$$h_L^+(f,\calu)\ge\limsup_{n\to\infty}-\frac1n\log\delta(f^{-n}(\calu)).$$
$$h_L^-(f,\calu)\ge\liminf_{n\to\infty}-\frac1n\log\delta(f^{-n}(\calu)).$$
(We intentionally replace $f^{1-n}$ by $f^{-n}$ in the limits.)
\end{corollary}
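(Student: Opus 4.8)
\emph{Proof proposal.} The plan is to deduce both inequalities directly from Corollary \ref{mindel}, with no further machinery. By that corollary, $\delta_n(f,\calu)=\min_{0\le k\le n-1}\delta(f^{-k}(\calu))$, and since this is a minimum over the indices $k=0,1,\dots,n-1$ it does not exceed the single term with $k=n-1$; thus
$$\delta_n(f,\calu)\le\delta(f^{1-n}(\calu))\qquad\text{for every }n\ge 1.$$
Applying the order-reversing map $t\mapsto-\frac1n\log t$ to this inequality gives $-\frac1n\log\delta_n(f,\calu)\ge-\frac1n\log\delta(f^{1-n}(\calu))$.

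Next I would take $\limsup_{n\to\infty}$, respectively $\liminf_{n\to\infty}$, of both sides. By Definition \ref{Lentropy} the left-hand side becomes $h_L^+(f,\calu)$, respectively $h_L^-(f,\calu)$, so everything reduces to evaluating the upper and lower limits on the right. Reindexing by $m=n-1$ and writing
$$-\frac1n\log\delta(f^{1-n}(\calu))=\frac{m}{m+1}\left(-\frac1m\log\delta(f^{-m}(\calu))\right),$$
I would then argue that the prefactor $\frac{m}{m+1}\to 1$ may simply be dropped. This is the only step that needs a moment's thought, and it is the point of the parenthetical remark in the statement: for any extended-real sequence $a_m$ and any $t_m\to 1$ with $t_m\neq 0$ eventually, one has $\limsup t_m a_m=\limsup a_m$ and $\liminf t_m a_m=\liminf a_m$, because scaling a subsequence by a factor tending to $1$ does not alter its (extended) limit. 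If one prefers a more concrete argument, compactness of $X$ gives $\delta(f^{-m}(\calu))\le\diam(X)<\infty$ whenever $X\notin f^{-m}(\calu)$, so the sequence in question is asymptotically bounded below near $0$ and the role of $\frac{m}{m+1}$ is transparent.

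Putting these together produces exactly the two asserted inequalities. I expect no genuine obstacle here: the substantive input is Corollary \ref{mindel}, and the only thing to be careful with is the harmless replacement of $f^{1-n}$ by $f^{-n}$, which costs merely the asymptotically trivial factor $\frac{m}{m+1}$.
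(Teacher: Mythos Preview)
Your proposal is correct and follows exactly the route the paper indicates: the paper does not spell out a proof but simply prefaces the corollary with ``Applying Corollary~\ref{mindel}, we also have,'' and your argument is the natural unpacking of that remark---use $\delta_n(f,\calu)\le\delta(f^{1-n}(\calu))$, take logarithms and limits, and absorb the harmless index shift via the factor $\tfrac{m}{m+1}\to 1$. There is nothing to add.
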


In fact, we have:

\begin{proposition}\label{eqdef}
$$h_L^+(f,\calu)=\limsup_{n\to\infty}-\frac1n\log\delta(f^{-n}(\calu))$$
\end{proposition}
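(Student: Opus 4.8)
The plan is to prove the inequality ``$\le$'', the reverse inequality ``$\ge$'' being precisely Corollary~\ref{nine}. Set $a_k:=-\log\delta(f^{-k}(\calu))$. By Corollary~\ref{mindel},
$$-\frac1n\log\delta_n(f,\calu)=-\frac1n\log\min_{0\le k\le n-1}\delta(f^{-k}(\calu))=\frac1n\max_{0\le k\le n-1}a_k,$$
so $h_L^+(f,\calu)=\limsup_{n\to\infty}\frac1n\max_{0\le k\le n-1}a_k$, whereas the right-hand side of the proposition is $L:=\limsup_{n\to\infty}\frac{a_n}{n}$. Hence the statement reduces to the purely sequential inequality $\limsup_{n}\frac1n\max_{0\le k\le n-1}a_k\le L$ for this particular sequence $(a_k)$.

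First I would observe that $(a_k)$ is bounded below. Since $X$ is compact, $\diam(X)<\infty$; and for any open cover $\calv$ none of whose elements equals $X$ one has $\delta(\calv)\le\diam(X)$, because if $\delta(\calv)>\diam(X)$ then the set $X$, having diameter $\diam(X)<\delta(\calv)$, would be contained in a single element of $\calv$, which would then have to be $X$ itself. (We may assume the covers $f^{-k}(\calu)$ are of this nondegenerate form; in the remaining trivial cases the claim is immediate.) Thus $a_k\ge-\log\diam(X)$ for all $k$, and in particular $L\ge 0$.

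Next I would prove the elementary claim: if $(a_k)$ is bounded below and $L:=\limsup_k\frac{a_k}{k}\ (\ge 0)$, then $\limsup_n\frac1n\max_{0\le k\le n-1}a_k\le L$. Fix $\varepsilon>0$; by the definition of $\limsup$ there is $N$ with $a_k\le(L+\varepsilon)k$ for all $k\ge N$. Put $M:=\max_{0\le k\le N-1}a_k$. For $n>N$ the maximum over $k\in\{N,\dots,n-1\}$ is at most $(L+\varepsilon)(n-1)$ (since $L+\varepsilon>0$), so
$$\frac1n\max_{0\le k\le n-1}a_k\le\max\left\{\frac{M}{n},\ (L+\varepsilon)\frac{n-1}{n}\right\},$$
and the right side tends to $L+\varepsilon$ as $n\to\infty$. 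Letting $\varepsilon\to0$ gives the claim, and applying it to $a_k=-\log\delta(f^{-k}(\calu))$ yields $h_L^+(f,\calu)\le L=\limsup_{n\to\infty}-\frac1n\log\delta(f^{-n}(\calu))$. Combined with Corollary~\ref{nine}, this proves the proposition.

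I expect the only real subtlety to be that we are handling a $\limsup$ rather than an honest limit, so the limit cannot simply be moved inside the maximum; the fix is the uniform tail estimate $a_k\le(L+\varepsilon)k$ valid for all large $k$, together with the monotonicity of $n\mapsto\max_{0\le k\le n-1}a_k$, which traps the maximum between a fixed constant and a linear function. Compactness of $X$ is used exactly once, to know that $(a_k)$ is bounded below via $\delta(\calv)\le\diam(X)$; this hypothesis is genuinely needed, since for $a_k=-k$ the elementary claim fails.
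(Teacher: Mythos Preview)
Your proof is correct and follows the same overall strategy as the paper: one inequality is Corollary~\ref{nine}, and the other is reduced to an elementary sequential lemma comparing $\limsup_n \frac{1}{n}\max_{k< n} a_k$ with $\limsup_n \frac{a_n}{n}$ for a sequence bounded below, the lower bound coming from $\delta(\calv)\le\diam(X)$. The only difference is in how that sequential lemma is established: the paper argues via the subsequence of ``record values'' (indices $n_j$ where the running maximum strictly increases, at which $a_{n_j}=b_{n_j}$), whereas you use a direct tail estimate $a_k\le(L+\varepsilon)k$ for $k\ge N$; both arguments are short and standard.
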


\begin{remark}
The analogous result is not necessarily true for $h_L^-$.
\end{remark}

The proposition is a corollary of the following lemma.
We also note that Lebesgue
number is always bounded by the diameter of the space.

\begin{lemma}
Let $a_n\ge K$ be real numbers, uniformly bounded from below. Let
$$b_n=\max\{a_k|1\le k\le n\}.$$
Then
$$\limsup_{n\to\infty}\frac{ a_n}n=\limsup_{n\to\infty}\frac{ b_n}n$$
\end{lemma}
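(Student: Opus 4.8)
The plan is to prove the easy inequality $\limsup_{n\to\infty} b_n/n \ge \limsup_{n\to\infty} a_n/n$ directly from the definition, and then to work for the reverse inequality, where the hypothesis $a_n \ge K$ is used in an essential way.

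First I would note that $b_n \ge a_n$ for every $n$, simply because $a_n$ is one of the terms in the maximum defining $b_n$; hence $\limsup_{n\to\infty} b_n/n \ge \limsup_{n\to\infty} a_n/n$, and in particular the conclusion is immediate when $\limsup_{n\to\infty} a_n/n = +\infty$. So I may assume $L := \limsup_{n\to\infty} a_n/n < +\infty$. Here I would first record that the lower bound forces $L \ge 0$: from $a_n/n \ge K/n$ and $K/n \to 0$ we get $\limsup_{n\to\infty} a_n/n \ge 0$.

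Next, fix $\ep > 0$. By definition of $L$ there is $N$ with $a_k < (L+\ep)k$ for all $k \ge N$. Put $M = \max\{a_1,\dots,a_{N-1}\}$, a constant. Then for $n \ge N$,
$$b_n = \max\{M,\ \max_{N\le k\le n} a_k\} \le \max\{M,\ (L+\ep)n\},$$
so $b_n/n \le \max\{M/n,\ L+\ep\}$. Since $L+\ep > 0$ (this is exactly where $L \ge 0$ is needed) and $M/n \to 0$, the right-hand side equals $L+\ep$ for all sufficiently large $n$; therefore $\limsup_{n\to\infty} b_n/n \le L+\ep$. Letting $\ep \downarrow 0$ yields $\limsup_{n\to\infty} b_n/n \le L$, which together with the first inequality gives the claimed equality.

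There is essentially no hard step here; the only point requiring care is the observation that the uniform lower bound $a_n \ge K$ is precisely what rules out $L < 0$ and thus prevents the term $M/n$ from dominating in the estimate for $b_n/n$ — as the example $a_n = -n$ (for which $b_n \equiv -1$) shows, the statement genuinely fails without that hypothesis. The application in Proposition~\ref{eqdef} is then immediate by taking $a_n = -\log\delta(f^{-n}(\calu))$, which is bounded below since $\delta(f^{-n}(\calu))$ is bounded above by the diameter of $X$.
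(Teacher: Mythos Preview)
Your proof is correct, but it proceeds differently from the paper's. The paper argues the reverse inequality by a case split on the monotone sequence $\{b_n\}$: if $b_n$ is eventually constant then $\limsup b_n/n=0\le\limsup a_n/n$ (using $a_n\ge K$); otherwise there are infinitely many ``record times'' $n_j$ at which $b_{n_j-1}<b_{n_j}$, and at those times $a_{n_j}=b_{n_j}$, so $\limsup b_n/n=\limsup_j b_{n_j}/n_j=\limsup_j a_{n_j}/n_j\le\limsup a_n/n$. Your argument instead fixes $\ep>0$, uses the definition of $L=\limsup a_n/n$ to bound $a_k<(L+\ep)k$ beyond some $N$, and then controls $b_n$ by the maximum of a fixed constant $M$ and $(L+\ep)n$. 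Both routes are elementary; the paper's record-time subsequence is a slightly more structural observation, while your $\ep$--$N$ estimate is more direct and has the virtue of making explicit exactly where the hypothesis $a_n\ge K$ enters (namely to force $L\ge 0$, so that the $M/n$ term is eventually dominated), together with the counterexample $a_n=-n$ showing the hypothesis cannot be dropped.
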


\begin{proof}
By definition, $a_n\le b_n$. So $$\limsup_{n\to\infty}\frac{ a_n}n\le\limsup_{n\to\infty}\frac{ b_n}n.$$

Note that $\{b_n\}$ is a non-decreasing sequence. If there is $N$ such that
for all $n>N$, $b_n=b_N$, then
$$\limsup_{n\to\infty}\frac{ b_n}n=0=\lim_{n\to\infty}\frac Kn\le\limsup_{n\to\infty}\frac{ a_n}n.$$

Otherwise, the set $J=\{n_j|b_{n_j-1}<b_{n_j}\}$ has infinitely many elements.
If $n_j\in J$ then we must have $a_{n_j}=b_{n_j}$.
$$\limsup_{n\to\infty}\frac{ b_n}n=\limsup_{j\to\infty}\frac{ b_{n_j}}{n_j}=\limsup_{j\to\infty}\frac{ a_{n_j}}{n_j}\le\limsup_{n\to\infty}\frac{ a_n}n.$$

\end{proof}

\begin{proposition}\label{eql}
$$h_L^*(f)=\lim_{\epsilon\to 0}\inf_{\mathrm{diam}(\mathcal{U})<\epsilon}h_L^*(f,\mathcal{U}).$$
\end{proposition}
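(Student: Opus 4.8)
The plan is to verify the two inequalities separately, after first noting that the limit on the right-hand side exists. If $\ep_1<\ep_2$ then $\{\mathcal{U}:\mathrm{diam}(\mathcal{U})<\ep_1\}\subseteq\{\mathcal{U}:\mathrm{diam}(\mathcal{U})<\ep_2\}$, so taking an infimum over the smaller family gives a larger (or equal) value; hence $\ep\mapsto\inf_{\mathrm{diam}(\mathcal{U})<\ep}h_L^*(f,\mathcal{U})$ is non-increasing in $\ep$. Moreover each such family is non-empty: by compactness one can cover $X$ by finitely many open balls of radius $\ep/4$, whose diameters are at most $\ep/2<\ep$. Consequently $\lim_{\ep\to0}\inf_{\mathrm{diam}(\mathcal{U})<\ep}h_L^*(f,\mathcal{U})$ exists in $[0,+\infty]$ and equals $\sup_{\ep>0}\inf_{\mathrm{diam}(\mathcal{U})<\ep}h_L^*(f,\mathcal{U})$.

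For the inequality ``$\le$'', fix $\ep>0$ and choose any finite open cover $\mathcal{U}_0$ with $\mathrm{diam}(\mathcal{U}_0)<\ep$. Then $\inf_{\mathrm{diam}(\mathcal{U})<\ep}h_L^*(f,\mathcal{U})\le h_L^*(f,\mathcal{U}_0)\le\sup_{\mathcal{V}}h_L^*(f,\mathcal{V})=h_L^*(f)$, and letting $\ep\to0$ gives the desired bound.

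The inequality ``$\ge$'' is the substance of the statement. I would fix an arbitrary finite open cover $\mathcal{V}$ and set $\ep_0=\delta(\mathcal{V})$, which is strictly positive by the Lebesgue Covering Lemma. Now if $\mathcal{U}$ is any finite open cover with $\mathrm{diam}(\mathcal{U})<\ep_0=\delta(\mathcal{V})$, then Lemma \ref{shk} yields $\mathcal{U}\succ\mathcal{V}$, and therefore, by item (5) of the lemma recording the basic properties of $h_L^*$, we get $h_L^*(f,\mathcal{U})\ge h_L^*(f,\mathcal{V})$. Taking the infimum over all such $\mathcal{U}$ gives $\inf_{\mathrm{diam}(\mathcal{U})<\ep_0}h_L^*(f,\mathcal{U})\ge h_L^*(f,\mathcal{V})$, hence $\lim_{\ep\to0}\inf_{\mathrm{diam}(\mathcal{U})<\ep}h_L^*(f,\mathcal{U})=\sup_{\ep>0}\inf_{\mathrm{diam}(\mathcal{U})<\ep}h_L^*(f,\mathcal{U})\ge h_L^*(f,\mathcal{V})$. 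Since $\mathcal{V}$ is arbitrary, passing to the supremum over all finite open covers $\mathcal{V}$ shows the right-hand side is $\ge h_L^*(f)$, and combining the two inequalities finishes the proof.

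I do not expect a genuine obstacle here: the entire argument reduces to the monotonicity bookkeeping above together with the one structural fact that any cover of diameter smaller than $\delta(\mathcal{V})$ automatically refines $\mathcal{V}$ (Lemma \ref{shk}) and thus cannot have smaller decay rate (the $h_L^*$-monotonicity property). The only points warranting a little care are checking that the infimum is over a non-empty collection (compactness) and that the $\ep\to0$ limit may be read off as a supremum (monotonicity).
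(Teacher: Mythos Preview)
Your proof is correct and follows essentially the same route as the paper: both directions rely on the definition of $h_L^*(f)$ as a supremum, and the key step for ``$\ge$'' is exactly the observation that any cover of diameter below $\delta(\mathcal{V})$ refines $\mathcal{V}$ (Lemma~\ref{shk}) together with the monotonicity of $h_L^*(f,\cdot)$ under refinement. You add some extra care (monotonicity in $\epsilon$, non-emptiness of the family) that the paper leaves implicit, but the argument is the same.
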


\begin{proof}

By definition, $h_L^*(f)\ge h_L^*(f,\calu)$ for every open cover $\calu$.
So
$$h_L^*(f)\ge\limsup_{\epsilon\to 0}\inf_{\mathrm{diam}(\mathcal{U})<\epsilon}h_L^*(f,\mathcal{U}).$$

For every $\theta>0$, if $h_L^*(f,\calv)>h_L^*(f)-\theta$, then
for every $\epsilon<\delta(\calv)$ and every open cover $\calu$ with
$\diam(\calu)<\epsilon$, we have $h_L^*(f,\calu)\ge h_L^*(f,\calv)>h_f^*(f)-\theta$.
So $$h_L^*(f)\le\liminf_{\epsilon\to 0}\inf_{\mathrm{diam}(\mathcal{U})<\epsilon}h_L^*(f,\mathcal{U}).$$

\end{proof}

\begin{remark}
This proposition implies that in Definition \ref{Lentropy} the supremums
may be taken over all open covers (not necessarily finite).
\end{remark}

\begin{corollary}
For every sequence of open covers $\{\calu_k\}_{k\ge
1}$,
if $\diam(\calu_k)\to 0$, then $h_L^*(f,\calu_k)\to h_L^*(f)$.
\end{corollary}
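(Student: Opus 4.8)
The plan is to sandwich $h_L^*(f,\calu_k)$ between two quantities that both tend to $h_L^*(f)$. The upper bound is free: by Definition \ref{Lentropy} we have $h_L^*(f,\calu_k)\le h_L^*(f)$ for every $k$, so $\limsup_{k\to\infty}h_L^*(f,\calu_k)\le h_L^*(f)$. The real content is the matching lower bound $\liminf_{k\to\infty}h_L^*(f,\calu_k)\ge h_L^*(f)$, and for this I would essentially repeat the second half of the proof of Proposition \ref{eql}.

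Concretely: fix $\theta>0$ and choose a finite open cover $\calv$ with $h_L^*(f,\calv)>h_L^*(f)-\theta$. Put $\epsilon=\delta(\calv)>0$. Since $\diam(\calu_k)\to 0$, there is $K$ with $\diam(\calu_k)<\epsilon=\delta(\calv)$ for all $k\ge K$. Then Lemma \ref{shk} gives $\calu_k\succ\calv$, and the monotonicity of $h_L^*$ under refinement (the last property in the Lemma preceding Proposition \ref{multi}) yields $h_L^*(f,\calu_k)\ge h_L^*(f,\calv)>h_L^*(f)-\theta$ for all $k\ge K$. Hence $\liminf_{k\to\infty}h_L^*(f,\calu_k)\ge h_L^*(f)-\theta$, and letting $\theta\to 0$ finishes the lower bound. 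Combining with the upper bound, $\lim_{k\to\infty}h_L^*(f,\calu_k)$ exists and equals $h_L^*(f)$.

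Since the whole argument reduces to two facts already in hand—monotonicity of $h_L^*$ under refinement and the diameter-versus-Lebesgue-number comparison of Lemma \ref{shk}—I do not expect any serious obstacle. The only point deserving a word of care is the case $h_L^*(f)=\infty$: there one simply replaces the threshold $h_L^*(f)-\theta$ by an arbitrary $M>0$ when selecting $\calv$ (such $\calv$ exists by the definition of $h_L^*(f)$ as a supremum), and the same estimate then shows $\liminf_{k\to\infty}h_L^*(f,\calu_k)\ge M$ for every $M$, i.e. $h_L^*(f,\calu_k)\to\infty=h_L^*(f)$.
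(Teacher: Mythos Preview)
Your proof is correct and is essentially the paper's approach: the paper states this corollary without a separate proof, as an immediate consequence of Proposition~\ref{eql}, and your argument is precisely the mechanism underlying that proposition's second inequality (choose $\calv$ near the supremum, then use Lemma~\ref{shk} and monotonicity under refinement). The handling of the case $h_L^*(f)=\infty$ is a nice extra touch.
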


\begin{corollary}
If $f$ is an expansive homeomorphism with expansive constant $\gamma$,
then for every open cover $\calu$ of diameter less than $\gamma$,
$h_L^*(f,\calu)=h_L^*(f)$.
\end{corollary}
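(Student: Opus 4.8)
We only need to prove $h_L^*(f,\calu)\ge h_L^*(f)$; the opposite inequality is immediate from the definition of $h_L^*(f)$ as a supremum over all open covers. The idea is the classical one that an expansive homeomorphism turns any sufficiently fine cover into a two-sided generator, after which the invariance properties of $h_L^*(f,\cdot)$ already recorded do the rest. Throughout, put $\calu_N:=\bigvee_{k=-N}^{N}f^{-k}(\calu)$; since $f$ is a homeomorphism each $\calu_N$ is a finite open cover, and $\calu_{N+1}$ refines $\calu_N$.

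\emph{Step 1 (the crux).} I will show $\diam(\calu_N)\to 0$ as $N\to\infty$. Since the $\calu_N$ refine one another, $\diam(\calu_N)$ is non-increasing; suppose it does not tend to $0$, so there is $\ep>0$ with $\diam(\calu_N)>\ep$ for all $N$, hence, for each $N$, an element $W_N=\bigcap_{k=-N}^{N}f^{-k}(U_k^N)$ with $U_k^N\in\calu$ and points $x_N,y_N\in W_N$ with $d(x_N,y_N)>\ep$. For $|k|\le N$ both $f^k(x_N)$ and $f^k(y_N)$ lie in $U_k^N\in\calu$, so
$$d\bigl(f^k(x_N),f^k(y_N)\bigr)\le\diam(\calu)<\gamma\qquad(|k|\le N).$$
By compactness of $X$ we may pass to a subsequence along which $x_N\to x$ and $y_N\to y$; then $d(x,y)\ge\ep>0$, so $x\ne y$. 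Fixing any $k\in\mathbb Z$ and letting $N\to\infty$ through values $\ge|k|$ gives $d(f^k(x),f^k(y))\le\diam(\calu)<\gamma$ for every $k\in\mathbb Z$, contradicting the assumption that $\gamma$ is an expansive constant for $f$. This is the only place the hypothesis $\diam(\calu)<\gamma$ is used, and it is what forces the refinement to run over both positive and negative times.

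\emph{Step 2 (bookkeeping).} A direct computation with indices shows $\calu_N=f^{N}\bigl(\calu_f^{2N+1}\bigr)=f^{-(-N)}\bigl(\calu_f^{2N+1}\bigr)$. Because $f$ is a homeomorphism, property (4) of the Lemma on basic properties of $h_L^*$ (applied first with exponent $-N$ to the cover $\calu_f^{2N+1}$, then with exponent $2N+1$ to $\calu$) yields
$$h_L^*(f,\calu_N)=h_L^*\bigl(f,\calu_f^{2N+1}\bigr)=h_L^*(f,\calu)\qquad\text{for every }N.$$

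\emph{Step 3 (conclusion).} By Step 1, $\diam(\calu_N)\to 0$, so the Corollary above on sequences of covers with vanishing diameter gives $h_L^*(f,\calu_N)\to h_L^*(f)$; comparing with Step 2, $h_L^*(f,\calu)=h_L^*(f)$, as desired. (Alternatively, avoiding that Corollary: given any open cover $\calv$, Step 1 lets us pick $N$ with $\diam(\calu_N)<\delta(\calv)$, so $\calu_N\succ\calv$ by Lemma \ref{shk}; then property (5) of the same Lemma together with Step 2 gives $h_L^*(f,\calu)=h_L^*(f,\calu_N)\ge h_L^*(f,\calv)$, and taking the supremum over $\calv$ completes the proof.) The substantive point is Step 1 — the standard fact that expansiveness makes a fine cover a two-sided generator; Steps 2 and 3 are routine applications of results already established.
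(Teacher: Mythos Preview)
Your proof is correct and follows essentially the same route as the paper: show that $\diam\bigl(\bigvee_{k=-N}^{N}f^{-k}(\calu)\bigr)\to 0$, use property (4) to see $h_L^*(f,\calu_N)=h_L^*(f,\calu)$, and then invoke the diameter-to-zero corollary of Proposition \ref{eql}. The only difference is that the paper outsources your Step 1 to Walters' text (Theorem 5.21), whereas you supply a direct compactness argument; otherwise the arguments coincide.
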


\begin{proof}
By assumption, every element of 
$\bigvee_{n=-\infty}^\infty f^n(\overline\calu)$
contains at most one point. $\calu$ is a generator.
By \cite[Theorem 5.21]{PW}, for every $\epsilon>0$ there is $N>0$ such that
$\diam(\bigvee_{n=-N}^N f^n(\calu))<\epsilon$. But $h_L^*(f,\calu)=h_L^*(f,\bigvee_{n=-N}^N f^n(\calu))$. So $h_L^*(f,\calu)=h_L^*(f)$.
\end{proof}

\begin{proposition}\label{lbd}
Let $X_\infty=\bigcap_{n=0}^\infty f^{n}(X)$.
If $x,y\in X_\infty$, for each $n>0$, let
$$D(f^{-n}(x),f^{-n}(y))=\inf\{d(z,z'):f^n(z)=x, f^n(z')=y\}.$$
If $X_\infty$ is not a single point, then
\begin{align*}
h_L^-(f)\ge&\sup_{x\ne y\in X_\infty}\liminf_{n\to\infty}\frac1n\log\frac{d(x,y)}{D(f^{-n}(x),f^{-n}(y))},\\
h_L^+(f)\ge&\sup_{x\ne y\in X_\infty}\limsup_{n\to\infty}\frac1n\log\frac{d(x,y)}{D(f^{-n}(x),f^{-n}(y))}.
\end{align*}
\end{proposition}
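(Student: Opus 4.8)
The plan is to produce, for each pair $x \neq y$ in $X_\infty$, a single open cover $\calu$ whose Lebesgue numbers $\delta_n(f,\calu)$ decay at least as fast as the ratio $D(f^{-n}(x),f^{-n}(y))/d(x,y)$ suggests, and then invoke Corollary \ref{nine} together with Proposition \ref{eqdef} to pass from the decay of $\delta(f^{-n}(\calu))$ to a lower bound for $h_L^*(f,\calu)$, hence for $h_L^*(f)$. First I would fix $x \neq y \in X_\infty$, set $r = \tfrac13 d(x,y) > 0$, and choose a finite open cover $\calu$ of $X$ with the property that no single element of $\calu$ meets both $B(x,r)$ and $B(y,r)$ — for instance, refine any cover by intersecting with the two open sets $X \setminus \overline{B(x,r)}$ and $X \setminus \overline{B(y,r)}$ where appropriate, so that each element is disjoint from at least one of $\overline{B(x,r)}$, $\overline{B(y,r)}$. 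Such a cover exists precisely because $x \neq y$; this is where the hypothesis that $X_\infty$ is not a single point is used.

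Next I would estimate $\delta(f^{-n}(\calu))$ from above. Since $x,y \in X_\infty$, for every $n$ there are preimages $z,z'$ with $f^n(z) = x$, $f^n(z') = y$ and $d(z,z')$ arbitrarily close to $D(f^{-n}(x),f^{-n}(y))$; write $\rho_n = D(f^{-n}(x),f^{-n}(y))$. The key observation is that no element $W$ of $f^{-n}(\calu)$ can contain both $z$ and $z'$: if $W = f^{-n}(U)$ with $U \in \calu$, then $z,z' \in W$ forces $x = f^n(z) \in U$ and $y = f^n(z') \in U$, so $U$ meets both $B(x,r)$ and $B(y,r)$, contradicting the choice of $\calu$. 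Therefore the midpoint of $z$ and $z'$ — or rather, any point whose distance to both $z$ and $z'$ is at most $\tfrac12 d(z,z')$, obtained by taking a point on a near-geodesic or simply arguing that a ball of radius slightly more than $\tfrac12 d(z,z')$ around an appropriate point cannot lie in any element — cannot be covered by a ball of radius $\tfrac12 d(z,z')$ sitting inside one element of $f^{-n}(\calu)$. More carefully, one shows $\delta(f^{-n}(\calu)) \le \tfrac12 d(z,z')$: if the Lebesgue number exceeded $\tfrac12 d(z,z')$, the ball of that radius centered at a point equidistant (to within the metric's constraints) from $z$ and $z'$ would be contained in some element, which would then contain both $z$ and $z'$. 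Letting $d(z,z') \to \rho_n$ gives $\delta(f^{-n}(\calu)) \le \tfrac12 \rho_n$ — or perhaps only $\le C\rho_n$ for an absolute constant, which is harmless since constants vanish after dividing by $n$ and taking limits.

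From $\delta(f^{-n}(\calu)) \le C\rho_n$ we get $-\tfrac1n \log \delta(f^{-n}(\calu)) \ge -\tfrac1n \log C - \tfrac1n \log \rho_n$, and since $d(x,y)$ is a fixed constant, $-\tfrac1n\log\rho_n$ and $\tfrac1n\log\frac{d(x,y)}{\rho_n}$ differ by a term tending to $0$. Taking $\liminf$ (resp.\ $\limsup$) and applying Corollary \ref{nine} yields
$$h_L^-(f,\calu) \ge \liminf_{n\to\infty} \frac1n \log \frac{d(x,y)}{D(f^{-n}(x),f^{-n}(y))}, \qquad h_L^+(f,\calu) \ge \limsup_{n\to\infty} \frac1n \log \frac{d(x,y)}{D(f^{-n}(x),f^{-n}(y))}.$$
Since $h_L^*(f) \ge h_L^*(f,\calu)$ by definition, and $x \neq y$ was arbitrary in $X_\infty$, taking the supremum over all such pairs finishes the proof. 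The main obstacle I anticipate is the geometric step of turning ``no element of $f^{-n}(\calu)$ contains both $z$ and $z'$'' into a quantitative upper bound on the Lebesgue number in a general compact metric space, where there need be no midpoints; the cleanest fix is to use the diameter formulation of the Lebesgue number from the Remark after Theorem 1.1 (a set of diameter $< \bar\delta$ lies in some element, and $\delta \le \bar\delta \le 2\delta$): the two-point set $\{z,z'\}$ has diameter $d(z,z')$ and is not contained in any element of $f^{-n}(\calu)$, so $\bar\delta(f^{-n}(\calu)) \le d(z,z')$, hence $\delta(f^{-n}(\calu)) \le d(z,z')$, and we may take $C = 1$.
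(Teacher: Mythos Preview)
Your argument is correct and follows essentially the same route as the paper: show that no element of $f^{-n}(\calu)$ can contain both a preimage of $x$ and a preimage of $y$, deduce $\delta(f^{-n}(\calu))\le D(f^{-n}(x),f^{-n}(y))$, and pass to the limit. The paper's version is slightly leaner in that it simply takes any $\calu$ with $\diam(\calu)<d(x,y)$ (so your separated-balls construction is unnecessary) and then bounds $\delta_{n+1}(f,\calu)$ directly rather than going through Corollary~\ref{nine} and the diameter formulation of the Lebesgue number, but the substance is identical.
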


\begin{proof}
We only show the first inequality. Proof of the other one is similar.

Let $$\sup_{x,y\in X}\liminf_{n\to\infty}\frac1n\log\frac{d(x,y)}{d(f^{-n}(x),f^{-n}(y))}=\lambda$$
Then for every $\epsilon>0$, there are $x_0,y_0\in X_\infty$ such that
$$\liminf_{n\to\infty}\frac1n\log\frac{d(x_0,y_0)}{d(f^{-n}(x_0),f^{-n}(y_0))}>\lambda-\epsilon$$
So there is $n_0>0$ such that if $n>n_0$ then
$$\frac1n\log\frac{d(x_0,y_0)}{d(f^{-n}(x_0),f^{-n}(y_0))}>\lambda-2\epsilon$$
For any open cover $\mathcal{U}$ of diameter less than $d(x_0,y_0)$,
$y_0$ is not covered by any element of $\mathcal{U}$ covering $x_0$.
So every element of $f^{-n}(\calu)$ covering a point of $f^{-n}(x_0)$ can
not cover any point of $f^{-n}(y_0)$. 
This implies that $$\delta_{n+1}(f,\mathcal{U})<d(f^{-n}(x_0),f^{-n}(y_0))$$
$$h_L^-(f,\mathcal{U})=\liminf_{n\to\infty}-\frac1n\log \delta_n(f,\mathcal{U})\ge\lambda-2\epsilon$$
Apply Proposition \ref{eql} and let $\epsilon\to 0$, then we obtain $h_L^-(f)\ge\lambda$.
\end{proof}

\begin{remark}
The inequalities may be strict. See Example \ref{exlbd}.
\end{remark}

%\subsection{Entropy and Dimension}

%Here we review some definitions of topological entropy and some sorts of
%dimensions.

%\begin{definition}
%\end{definition}
%

\section{Lebesgue numbers, entropy and dimensions}

In this section we investigate the relations between decay of Lebesgue numbers,
topological entropy and sorts of dimensions. We consider the three definitions
of topological entropy: one using open covers, oene using separated sets,
and one closely related to Hausdorff dimension. Each of them has something
to do with Lebesgue numbers.

\subsection{Lebesgue numbers and minimal covers}
Denote by $S(\mathcal{U})$ the smallest cardinality of a sub-cover of $\mathcal{U}$.
For a given continuous map $f$
on a compact metric space $X$, the topological entropy of $\mathcal{U}$
is
$$h(f,\mathcal{U})=\lim_{n\to\infty}\frac1n\log S(\mathcal{U}_f^n).$$
The topological entropy $h(f)$ of $f$ is then defined as the maximum of $h(f,\mathcal{U})$
taking
over all finite open covers of $X$.

 Denote by $B(x,\gamma)=\{y\in X| d(x,y)<\gamma\}$ the open $\gamma$-ball
around $x$. Let $N(\gamma)$ be the minimal number of $\gamma$-balls needed
to cover $X$. The upper box dimension of $X$ is defined by
$$\dimb(X)=\limsup_{\gamma\to 0}-\frac{\log N(\gamma)}{\log \gamma}.$$
It is clear that if $\gamma_1\ge\gamma_2$ then $N(\gamma_1)\le N(\gamma_2)$.

\begin{lemma}
For every open cover $\mathcal{U}$, if $\mathcal{W}$ is a minimal
sub-cover then
$N(\delta(\mathcal{W}))\ge |\mathcal{W}|=S(\mathcal{U})$.

\end{lemma}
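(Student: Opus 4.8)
The plan is to unpack the definitions and use the Lebesgue Covering Lemma together with a counting argument. Let $\mathcal{W}\subseteq\mathcal{U}$ be a minimal sub-cover, so $|\mathcal{W}|=S(\mathcal{U})$. Set $\gamma=\delta(\mathcal{W})>0$, the Lebesgue number of $\mathcal{W}$. By the Lebesgue Covering Lemma, for each $x\in X$ the ball $B(x,\gamma)$ of radius $\gamma$ is contained in some element of $\mathcal{W}$ (strictly speaking, in some element of $\mathcal{W}$ provided we are slightly careful with ``radius at most $\delta$'' versus ``radius $\delta$''; since only an inequality $N(\gamma)\ge|\mathcal{W}|$ is claimed, one can work with balls of radius $\gamma$, or pass to radius $\gamma-\ep$ and let $\ep\to 0$ using monotonicity of $N$, whichever is cleaner).

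Next I would take an optimal cover of $X$ by $N(\gamma)$ open balls of radius $\gamma$, say $B(x_1,\gamma),\dots,B(x_{N(\gamma)},\gamma)$. Each such ball is contained in some $W_i\in\mathcal{W}$ by the previous step; choose one such $W_i$ for each $i$. Then $\{W_1,\dots,W_{N(\gamma)}\}$ is a collection of elements of $\mathcal{W}$ whose union contains $\bigcup_i B(x_i,\gamma)=X$, so it is a sub-cover of $X$ drawn from $\mathcal{W}$, hence also a sub-cover drawn from $\mathcal{U}$. By minimality of $\mathcal{W}$ as a sub-cover of $\mathcal{U}$, no sub-cover of $X$ consisting of elements of $\mathcal{U}$ can have fewer than $|\mathcal{W}|=S(\mathcal{U})$ members; therefore the number of distinct sets among $W_1,\dots,W_{N(\gamma)}$ is at least $S(\mathcal{U})$, which forces $N(\gamma)\ge S(\mathcal{U})=|\mathcal{W}|$. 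This gives $N(\delta(\mathcal{W}))\ge|\mathcal{W}|=S(\mathcal{U})$, as desired.

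I do not expect a genuine obstacle here; the only point requiring a little care is the boundary case in the Lebesgue Covering Lemma (balls of radius exactly $\delta$ versus strictly less than $\delta$), which is handled by the $\ep$-perturbation and the monotonicity remark $N(\gamma_1)\le N(\gamma_2)$ for $\gamma_1\ge\gamma_2$ already noted in the text. A second minor point is that the chosen sets $W_i$ need not be distinct, but that only helps: a sub-cover of size $N(\gamma)$ (with repetitions collapsed, of size at most $N(\gamma)$) cannot beat the minimal size $S(\mathcal{U})$, so $N(\gamma)\ge S(\mathcal{U})$.
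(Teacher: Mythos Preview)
Your argument is correct, but it proceeds by a route different from the paper's. The paper uses minimality of $\mathcal{W}$ to produce, for each $W_j\in\mathcal{W}$, a point $x_j\in W_j\setminus\bigcup_{k\ne j}W_k$; then each $\delta(\mathcal{W})$-ball, lying inside a single $W_j$, can contain at most one of the $x_j$'s, forcing $N(\delta(\mathcal{W}))\ge|\mathcal{W}|$. You instead observe that an optimal cover by $N(\delta(\mathcal{W}))$ balls refines $\mathcal{W}$, so selecting for each ball a containing element of $\mathcal{W}$ yields a sub-cover of $\mathcal{U}$ of cardinality at most $N(\delta(\mathcal{W}))$, and then invoke $S(\mathcal{U})$ directly. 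Your version is arguably the more streamlined of the two, since it uses only the equality $|\mathcal{W}|=S(\mathcal{U})$ and never needs the ``irredundancy'' property of a minimal sub-cover; the paper's version, on the other hand, exhibits an explicit set of $|\mathcal{W}|$ points that any $\delta(\mathcal{W})$-ball cover must separate, which is a slightly more concrete piece of information.
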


\begin{proof}
Let $\mathcal{W}=\{W_j\}$. Then for each $j$, there is $x_j\in W_j$ such
that $x_j\notin W_k$ for all $k\neq j$. Otherwise $\mathcal{W}-\{W_j\}$ is
a sub-cover of $\mathcal{U}$ with smaller cardinality.

As every $\delta(\mathcal{W})$-ball is covered by some element of $\mathcal{W}$,
it can cover at most
one element in $\{x_j\}_{1\le j\le |\mathcal{W}|}$.
So the minimal number of $\delta(\mathcal{W})$-balls needed to cover
$X$ is no less than $|\mathcal{W}|$.
\end{proof}

\begin{corollary}
Let
$\Delta(\mathcal{U})=\max\{\delta(\mathcal{W})|\mathcal{W}\text{ is a
minimal sub-cover of }\mathcal{U}\}$.
Then $N(\Delta(\mathcal{U}))\ge S(\mathcal{U})$.
\end{corollary}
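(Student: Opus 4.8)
The plan is to derive the statement directly from the preceding lemma, the only extra ingredient being the observation that the maximum defining $\Delta(\mathcal{U})$ is actually attained. First I would note that, in the context at hand, $\mathcal{U}$ is a finite open cover, so it has only finitely many sub-covers, and in particular only finitely many minimal sub-covers; hence there exists a minimal sub-cover $\mathcal{W}^{\ast}$ of $\mathcal{U}$ with $\delta(\mathcal{W}^{\ast})=\Delta(\mathcal{U})$. Applying the previous lemma to this particular $\mathcal{W}^{\ast}$ gives
$$N(\Delta(\mathcal{U}))=N(\delta(\mathcal{W}^{\ast}))\ge|\mathcal{W}^{\ast}|=S(\mathcal{U}),$$
which is exactly the claimed inequality.

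The one point that requires a little care — and the only place where something could go wrong if handled naively — is that one should \emph{not} try to combine the lemma's bound $N(\delta(\mathcal{W}))\ge S(\mathcal{U})$ for an arbitrary minimal sub-cover $\mathcal{W}$ with the monotonicity $N(\gamma_1)\le N(\gamma_2)$ for $\gamma_1\ge\gamma_2$. Since $N$ is non-increasing, replacing $\delta(\mathcal{W})\le\Delta(\mathcal{U})$ by the larger radius $\Delta(\mathcal{U})$ can only \emph{decrease} the value of $N$, so this route gives nothing. The correct move is to evaluate $N$ exactly at a radius that is realized by some minimal sub-cover, and that is why we must pass to $\mathcal{W}^{\ast}$; the existence of $\mathcal{W}^{\ast}$ is where finiteness of $\mathcal{U}$ is used. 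Beyond this bookkeeping there is no genuine obstacle: the corollary is a direct packaging of the lemma.
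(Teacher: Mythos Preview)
Your argument is correct and is exactly the intended one: the paper states this result as an immediate corollary of the preceding lemma without separate proof, and your derivation---choosing a minimal sub-cover $\mathcal{W}^{\ast}$ realizing $\Delta(\mathcal{U})$ and applying the lemma to it---is precisely how the corollary follows. Your remark that finiteness of $\mathcal{U}$ is what guarantees the maximum is attained, and your warning about the wrong-way monotonicity of $N$, are both on point.
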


\begin{definition}
For every open cover $\mathcal{U}$,
let $\Delta_n=\Delta(\mathcal{U}_f^n)$. We set
$$h_L^\Delta(f,\calu)=\liminf_{n\to\infty}-\frac1n\log\Delta_n$$
and
$$h_L^\Delta(f)=\sup_{\mathcal{U}}h_L^\Delta(f,\calu),$$
where the supremum is taken over all finite open covers.
\end{definition}

\begin{remark}
For every $\calu$, we have $\delta(\calu)\ge\Delta(\calu)$. So
$h_L^\Delta(f,\calu)\ge h_L^-(f,\calu)$ and $h_L^\Delta(f)\ge h_L^-(f)$.
\end{remark}

\begin{theorem}\label{entpar}
$$\dimb(X)\cdot h_L^\Delta(f,\calu)\ge h(f,\mathcal{U}).$$
\end{theorem}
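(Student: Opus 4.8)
The plan is to combine two ingredients already at hand: the covering inequality $S(\mathcal{U})\le N(\Delta(\mathcal{U}))$ from the corollary just above, applied to the refined cover $\mathcal{U}_f^n$, and the definition of the upper box dimension, which controls $N(\gamma)$ by $\gamma$ for small $\gamma$. Fix $\epsilon>0$. We may assume $\dimb(X)<\infty$ (if $\dimb(X)=\infty$ the inequality is read with the right-hand side equal to $+\infty$ whenever $h_L^\Delta(f,\calu)>0$, and the degenerate case $h_L^\Delta(f,\calu)=0$ is anyway covered by the argument below only when $\dimb(X)<\infty$). By the definition of $\dimb(X)$ as a $\limsup$, choose $\gamma_0\in(0,1)$ such that $\log N(\gamma)\le(\dimb(X)+\epsilon)(-\log\gamma)$ for every $\gamma\in(0,\gamma_0)$.

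Next I would record the key per-$n$ estimate. For every $n$ the corollary gives $S(\mathcal{U}_f^n)\le N(\Delta_n)$, where $\Delta_n=\Delta(\mathcal{U}_f^n)$. Since $N$ is non-increasing in its argument and $\log N(\gamma_0)\ge 0$, splitting into the cases $\Delta_n\ge\gamma_0$ (use $N(\Delta_n)\le N(\gamma_0)$) and $\Delta_n<\gamma_0$ (use the box estimate, noting $-\log\Delta_n>-\log\gamma_0>0$) yields, for every $n$,
$$\log S(\mathcal{U}_f^n)\le\log N(\gamma_0)+(\dimb(X)+\epsilon)\max\bigl\{0,\ -\log\Delta_n\bigr\}.$$
The point of the truncation $\max\{0,\cdot\}$ is that $\Delta_n$ need not tend to $0$, so this weaker inequality is what holds uniformly in $n$.

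Finally I would divide by $n$ and pass to the limit. The left-hand side $\tfrac1n\log S(\mathcal{U}_f^n)$ converges to $h(f,\mathcal{U})$; on the right, $\tfrac1n\log N(\gamma_0)\to 0$, and since $\liminf_n\bigl(-\tfrac1n\log\Delta_n\bigr)=h_L^\Delta(f,\calu)$, which is $\ge 0$ by the remark $h_L^\Delta(f,\calu)\ge h_L^-(f,\calu)\ge 0$, one checks that $\liminf_n\max\{0,\ -\tfrac1n\log\Delta_n\}=h_L^\Delta(f,\calu)$ as well. Taking $\liminf_{n\to\infty}$ and using that $\tfrac1n\log N(\gamma_0)$ converges to $0$ therefore gives $h(f,\mathcal{U})\le(\dimb(X)+\epsilon)\,h_L^\Delta(f,\calu)$, and letting $\epsilon\to 0$ completes the proof.

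The argument is essentially bookkeeping with limits; the only genuinely substantive input is the lemma (and its corollary) producing $N(\Delta(\mathcal{U}))\ge S(\mathcal{U})$, already proved above. The one subtlety to watch is precisely that $\Delta_n$ may stay bounded away from $0$ (this happens, for instance, in entropy-zero situations), which is why I pass through the truncated quantity rather than writing the naive inequality $\tfrac1n\log N(\Delta_n)\le(\dimb(X)+\epsilon)(-\tfrac1n\log\Delta_n)$, which is valid only once $\Delta_n<\gamma_0$.
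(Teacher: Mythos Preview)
Your argument is correct and follows the same route as the paper: both combine the corollary $S(\mathcal{U}_f^n)\le N(\Delta_n)$ with the box-dimension bound $\log N(\gamma)\le(\dimb(X)+\epsilon)(-\log\gamma)$ for small $\gamma$, then divide by $n$ and pass to the limit. The only cosmetic difference is that the paper disposes of the possibility that $\Delta_n\not\to 0$ by first splitting off the trivial case $h(f,\mathcal{U})=0$ (since $h(f,\mathcal{U})>0$ forces $S(\mathcal{U}_f^n)\to\infty$ and hence $\Delta_n\to 0$), whereas you handle both cases uniformly via the truncation $\max\{0,-\log\Delta_n\}$; the substance is identical.
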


\begin{proof}

If $h(f,\mathcal{U})=0$ then the theorem is trivial since $\Delta_n$ is non-increasing.

If $h(f,\mathcal{U})>0$ then as $n\to\infty$, $S(\mathcal{U}_f^n)\to\infty$.
But $N(\Delta_n)\ge S(\mathcal{U}_f^n)$, we must have $\Delta_n\to 0$.

Fix a small $\epsilon>0$. For every $N_0>0$, by definition of the upper box dimension,
there is $\gamma_0>0$ and $N_1>N_0$ such that $\Delta_n<\gamma_0$ for all $n>N_1$, and
$$-\frac{\log N(\Delta_n)}{\log \Delta_n}<\dimb(X)+\epsilon.$$
Hence
$$-\log\Delta_n\cdot({\dimb(X)+\epsilon})>\log N(\Delta_n)\ge\log S(\mathcal{U}_f^n),$$
$$\liminf_{n\to\infty}-\frac1n\log\Delta_n
\cdot{\dimb(X)+\epsilon}\ge \lim_{n\to\infty}\frac1n\log S(U_f^n).$$
Let $\epsilon\to 0$ then the result follows.
\end{proof}

\begin{corollary}
$$\dimb(X)\cdot h_L^\Delta(f)\ge h(f).$$
\end{corollary}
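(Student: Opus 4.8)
The plan is to derive the corollary from Theorem \ref{entpar} by passing to the supremum over all finite open covers. The point is that $\dimb(X)$ is a single nonnegative quantity, independent of the cover, so the per-cover inequality can simply be amplified on one side. Concretely, for each finite open cover $\calu$ I would write
$$\dimb(X)\cdot h_L^\Delta(f)\;\ge\;\dimb(X)\cdot h_L^\Delta(f,\calu)\;\ge\;h(f,\calu),$$
where the first inequality uses the definition $h_L^\Delta(f)=\sup_{\calu}h_L^\Delta(f,\calu)$ together with $\dimb(X)\ge 0$, and the second is exactly Theorem \ref{entpar}. The left-hand side no longer depends on $\calu$, so it is an upper bound for the family $\{h(f,\calu)\}$ as $\calu$ ranges over finite open covers; taking the supremum of the right-hand side gives $\dimb(X)\cdot h_L^\Delta(f)\ge h(f)$, which is the assertion.

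The only thing that requires a second's attention is the behaviour of the product when $\dimb(X)\in\{0,\infty\}$, so that the expression $\dimb(X)\cdot h_L^\Delta(f)$ is read correctly. If $\dimb(X)=0$, then for every $\calu$ Theorem \ref{entpar} already forces $h(f,\calu)=0$, hence $h(f)=0$, and the inequality holds trivially whatever the value of $h_L^\Delta(f)$. If $\dimb(X)=\infty$, one observes from the proof of Theorem \ref{entpar} that $h_L^\Delta(f,\calu)=0$ again forces $h(f,\calu)=0$, so the inequality is never violated. In all remaining cases the chain of inequalities above applies verbatim.

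In short, there is no substantive obstacle here: the corollary is a routine passage to the supremum, made clean by the fact that $h(f)$ and $h_L^\Delta(f)$ are defined as suprema of the corresponding per-cover quantities over the same index set of finite open covers. The main (minor) care is bookkeeping around the degenerate values of the box dimension.
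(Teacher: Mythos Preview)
Your proposal is correct and is exactly the intended route: the paper states this as an immediate corollary of Theorem \ref{entpar} without proof, and the passage to the supremum over finite open covers that you spell out is precisely what is meant. Your extra care with the degenerate cases $\dimb(X)\in\{0,\infty\}$ is reasonable bookkeeping but not something the paper addresses.
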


%begin{corollary}
%If $$\liminf_{n\to\infty}-\frac1n\log\Delta_n=0$$
%Then $h(f,\mathcal{U})=0$.
%\end{corollary}

\subsection{Lebesgue numbers and separated sets}
The last subsection is just a warm-up. Usually, it is not convenient
to refer to Lebesgue number of the minimal cover as it might be quite different
from Lebesgue number of the original one. So we shall not focus on $h_L^\Delta$
but turn to $h_L^-$ and $h_L^+$.

For a continuous map $f$ on a compact metric space $(X,d)$, we define for
each $n$ a metric
$$d_f^n(x,y)=\max_{0\le k\le n-1} d(f^k(x),f^k(y)).$$
Recall for $\epsilon>0$, $E\subset X$ is said to be an $(n,\epsilon)$-separated set if $d_f^n(x,y)>\epsilon$ for distinct points $x,y\in E$. Let
$s_n(f,\epsilon)=\max |E|$, where the maximum is taken over all $(n,\epsilon)$-separated
sets. Then 
%(\cite{Bow1}\cite{Dina})
$$h(f)=\lim_{\epsilon\to 0}\limsup_{n\to\infty}\frac1n \log s_n(f,\epsilon)=\lim_{\epsilon\to 0}\liminf_{n\to\infty}\frac1n \log s_n(f,\epsilon).$$

Now we consider an open cover $\mathcal{U}=\{U_\alpha\}_{\alpha\in I}$ of
$X$. 

\begin{lemma}
For a given open cover $\mathcal{U}$ and $\epsilon>0$, if $\diam(\mathcal{U})<\epsilon$,
then for each $n$, $N(\delta_n(f,\mathcal{U}))>s_n(f,\epsilon)$.
\end{lemma}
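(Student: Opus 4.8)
The statement to prove is: if $\diam(\calu)<\epsilon$, then for each $n$, $N(\delta_n(f,\calu))>s_n(f,\epsilon)$. The plan is to compare an $(n,\epsilon)$-separated set of maximal size with a minimal cover of $X$ by balls of radius $\delta_n(f,\calu)$, and show that distinct points of the separated set must lie in distinct balls. First I would fix a maximal $(n,\epsilon)$-separated set $E$ with $|E|=s_n(f,\epsilon)$, and fix a cover of $X$ by $N(\delta_n(f,\calu))$ open balls of radius $\delta_n(f,\calu)$. The heart of the argument is the following claim: each such ball contains at most one point of $E$. Granting the claim, the balls covering $X$ must in particular cover $E$, and since each ball absorbs at most one point of $E$, we get $N(\delta_n(f,\calu))\ge|E|=s_n(f,\epsilon)$; the strict inequality then comes from a separate easy observation (see below).

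To prove the claim, take a ball $B=B(z,\delta_n(f,\calu))$ and suppose $x,y\in B\cap E$. By the definition of the Lebesgue number of $\calu_f^n$, the ball $B$ is contained in some element $W\in\calu_f^n$, so $x,y\in W$. Now $W=\bigcap_{k=0}^{n-1}f^{-k}(U_k)$ for suitable $U_k\in\calu$, hence $f^k(x),f^k(y)\in U_k$ for each $0\le k\le n-1$. Since $\diam(U_k)\le\diam(\calu)<\epsilon$, this gives $d(f^k(x),f^k(y))<\epsilon$ for every $k$ in that range, i.e. $d_f^n(x,y)<\epsilon$. But $x,y\in E$ and $E$ is $(n,\epsilon)$-separated, so $x=y$. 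This establishes the claim.

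It remains to upgrade $\ge$ to $>$. The point is that $s_n(f,\epsilon)$-many balls of radius $\delta_n(f,\calu)$ cannot cover $X$ on their own: a cover of $X$ by $\delta_n(f,\calu)$-balls that also hits every point of $E$ needs at least $s_n(f,\epsilon)$ balls just for $E$, but such a cover is honest cover of all of $X$, and one checks that equality $N(\delta_n(f,\calu))=s_n(f,\epsilon)$ would force each of the $s_n(f,\epsilon)$ balls to contain exactly one point of $E$ while still covering the rest of $X$ — in particular $X\setminus E$ would have to be absorbed without any extra ball, which fails because any point not equal to the center and within range can be handled, but a point at distance $\ge\delta_n(f,\calu)$ from all centers cannot; more robustly, one simply notes that removing the $\epsilon/2$-balls around the maximal separated set still leaves $X$ uncovered unless a strictly larger collection is used. (If one prefers to sidestep this, one observes that by maximality of $E$ the $d_f^n$-balls of radius $\epsilon$ about points of $E$ cover $X$, and a short counting argument against a minimal $\delta_n$-ball cover of the same size yields the strict inequality.)

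The main obstacle I anticipate is precisely this last step: promoting $\ge$ to the strict $>$. The inequality-without-strictness part is a clean pigeonhole argument built directly on the Lebesgue-number property and $\diam(\calu)<\epsilon$; the strictness requires a slightly more careful use of the maximality of $E$ (or, alternatively, the observation that the centers of a minimal ball cover need not lie in $E$, and that $E$ together with one more point forces an extra ball). Everything else is routine unpacking of definitions of $\calu_f^n$, $d_f^n$, and $\delta_n$.
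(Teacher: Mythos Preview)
Your core argument is exactly the paper's: take a maximal $(n,\epsilon)$-separated set $E$, observe that any $\delta_n$-ball lies in some element of $\calu_f^n$, unpack that element as $\bigcap_{k=0}^{n-1}f^{-k}(U_{i_k})$, and use $\diam(\calu)<\epsilon$ to conclude that two points of $E$ in the same ball would satisfy $d_f^n(x,y)<\epsilon$, a contradiction. So the approaches coincide.

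Where you and the paper diverge is on the strict inequality. The paper's proof, after establishing that each $\delta_n$-ball meets $E$ in at most one point, simply writes ``$N(\delta_n)>s_n(f,\epsilon)$'' with no additional justification; in other words, the paper's own argument also only yields $N(\delta_n)\ge s_n(f,\epsilon)$. You correctly spotted this gap. Your attempts to upgrade $\ge$ to $>$, however, are not proofs: the sentences about ``removing the $\epsilon/2$-balls'' and ``a short counting argument'' are gestures rather than arguments, and in fact no such upgrade is possible in general. For a two-point space $X=\{0,1\}$ with $f=\mathrm{id}$, the cover $\calu=\{\{0\},\{1\}\}$ has $\diam(\calu)=0<\epsilon$ for any $\epsilon\in(0,1)$, while $\delta_n=1$, $N(1)=2$, and $s_n(f,\epsilon)=2$, so equality occurs.

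This does not matter for the application: in the proof of the subsequent theorem the paper only uses $N(\delta_n)\ge s_n(f,\epsilon)$. So the honest version of the lemma is the non-strict inequality, which your main argument (and the paper's) proves cleanly; you should drop the final paragraph rather than try to repair it.
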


\begin{proof}

Let $E$ be an $(n,\epsilon)$-separated set of cardinality $s_n(f,\epsilon)$.
If distinct points $x,y\in E$ are covered by the same $\delta_n$-ball, then
the ball is  covered by some element
$$V=\bigcap_{k=0}^{n-1} f^{-k}(U_{i_k})\subset \mathcal{U}_f^n,$$
where $U_{i_k}\in\mathcal{U}$ is some element of
$\mathcal{U}$ for each $k$.
This implies that $x,y\in V$ and $f^k(x),f^k(y)\in
U_{i_k}$. Since $\diam(\mathcal{U})<\epsilon$, $d(f^k(x),f^k(y))<\epsilon$ for
$0\le k\le n-1$. So $d_f^n(x,y)<\epsilon$, which contradicts the fact that
$x$ and $y$ are $(n,\epsilon)$-separated.

So each $\delta_n$-ball can cover at most one point in $E$. $N(\delta_n)>s_n(f,\epsilon)$.
\end{proof}

\begin{theorem}\label{lebent}
$$\dimb(X)\cdot h_L^-(f)\ge h(f)$$
\end{theorem}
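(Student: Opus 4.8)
The plan is to combine the lemma just proved with the definition of upper box dimension, exactly in the spirit of the proof of Theorem~\ref{entpar}. The lemma gives $N(\delta_n(f,\mathcal{U}))>s_n(f,\epsilon)$ whenever $\diam(\mathcal{U})<\epsilon$, and we want to feed the left-hand side into the definition of $\dimb(X)$ and the right-hand side into the definition of $h(f)$. So first I would fix $\epsilon>0$ and restrict attention to open covers $\mathcal{U}$ with $\diam(\mathcal{U})<\epsilon$; for such a cover, taking $-\frac1n\log$ of the lemma's inequality and recalling $\delta_n\to 0$ (which holds once $s_n(f,\epsilon)\to\infty$, i.e.\ once $h(f)>0$; the case $h(f)=0$ is trivial since $h_L^-(f)\ge 0$), I would write $-\frac1n\log\delta_n = \bigl(-\frac{\log N(\delta_n)}{\log\delta_n}\bigr)\cdot\bigl(-\frac1n\log N(\delta_n)\bigr)$ and bound the first factor by $\dimb(X)+\theta$ for $n$ large, using the definition of upper box dimension together with $\delta_n\to 0$.

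Next I would take $\liminf_{n\to\infty}$ on both sides. On the left this produces $h_L^-(f,\mathcal{U})$. On the right, $\liminf_{n\to\infty}(-\frac1n\log N(\delta_n))\ge\liminf_{n\to\infty}\frac1n\log s_n(f,\epsilon)$, and after letting $\theta\to 0$ this yields
$$(\dimb(X))\cdot h_L^-(f,\mathcal{U})\ge\liminf_{n\to\infty}\frac1n\log s_n(f,\epsilon).$$
Since this holds for \emph{every} open cover $\mathcal{U}$ of diameter less than $\epsilon$, I would pass to the supremum over such covers; by Proposition~\ref{eql} (or rather its consequence that $h_L^-(f)=\lim_{\epsilon\to 0}\inf_{\diam(\mathcal{U})<\epsilon}h_L^-(f,\mathcal{U})$), I actually need to be a little careful here: Proposition~\ref{eql} controls the \emph{infimum}, not the supremum, over small-diameter covers. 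But that is fine, because the inequality above holds for every such $\mathcal{U}$, hence in particular $\dimb(X)\cdot\sup_{\diam(\mathcal{U})<\epsilon}h_L^-(f,\mathcal{U})\ge\liminf_n\frac1n\log s_n(f,\epsilon)$, and $\sup_{\diam(\mathcal{U})<\epsilon}h_L^-(f,\mathcal{U})\le h_L^-(f)$ by definition.

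Finally I would let $\epsilon\to 0$ on the right-hand side; since $\liminf_{n\to\infty}\frac1n\log s_n(f,\epsilon)\to h(f)$ by the variational/separated-set formula for topological entropy quoted just above the lemma, this gives $\dimb(X)\cdot h_L^-(f)\ge h(f)$, as desired. The main subtlety I expect is the interplay between limits: one must take $\liminf_n$ before letting $\theta\to0$ and $\epsilon\to0$, and one must make sure that $\delta_n\to0$ is available before invoking the box-dimension estimate for the factor $-\frac{\log N(\delta_n)}{\log\delta_n}$ (this is why the $h(f)=0$ case is split off). There is also the minor point that $-\frac1n\log N(\delta_n)$ could a priori be negative, but $N(\delta_n)\ge s_n(f,\epsilon)$ together with $s_n(f,\epsilon)\ge 1$ keeps everything in line, and for the final inequality only the lower bound by $\liminf_n\frac1n\log s_n(f,\epsilon)$ matters. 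Everything else is the same bookkeeping as in Theorem~\ref{entpar}.
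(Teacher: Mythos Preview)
Your approach is the same as the paper's: split off the case $h(f)=0$, use the lemma $N(\delta_n)\ge s_n(f,\epsilon)$ together with the box-dimension bound $-\log N(\delta_n)/\log\delta_n<\dimb(X)+\theta$ (valid once $\delta_n\to0$) to obtain $(\dimb(X)+\theta)\bigl(-\tfrac1n\log\delta_n\bigr)\ge\tfrac1n\log s_n(f,\epsilon)$, take $\liminf_n$, then let $\theta\to0$ and $\epsilon\to0$; and as you correctly note, only the trivial bound $h_L^-(f,\mathcal{U})\le h_L^-(f)$ is needed at the end, not the full strength of Proposition~\ref{eql}. One slip to fix: your displayed factorization is written backwards---the identity is $\tfrac1n\log N(\delta_n)=\bigl(-\tfrac{\log N(\delta_n)}{\log\delta_n}\bigr)\bigl(-\tfrac1n\log\delta_n\bigr)$, not with $-\tfrac1n\log\delta_n$ on the left---but you use the correct inequality immediately afterward, so the argument stands.
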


\begin{proof}
If $h(f)=0$ then it is trivial.

If $h(f)>0$ then for all small $\epsilon>0$, as $n\to\infty$, $s_n(f,\epsilon)\to\infty$.
But for every open cover $\mathcal{U}$ such that $\diam(\mathcal{U})<\epsilon$,
we have $N(\delta_n)\ge s_n(f,\epsilon)$. Hence $\delta_n\to 0$.

Fix a small $\theta>0$. For every $N_0>0$, by definition of the upper box dimension,
there is $\gamma_0>0$ and $N_1>N_0$ such that $\delta_n<\gamma_0$ for all $n>N_1$, and
$$-\frac{\log N(\delta_n)}{\log \delta_n}<\dimb(X)+\theta$$
Hence
$$-\log\delta_n\cdot({\dimb(X)+\theta})>\log N(\delta_n)\ge \log s_n(f,\epsilon)$$
$$(\dimb(X)+\theta)\cdot\liminf_{n\to\infty}-\frac1n\log\delta_n(f,\mathcal{U})\ge\lim_{n\to\infty}\frac1n \log s_n(f,\epsilon)$$
This is true for every $\mathcal{U}$ with diameter less than $\epsilon$ and
every $\theta>0$.
Let $\epsilon\to 0$ and apply Proposition \ref{eql}.
\end{proof}

%\begin{corollary}
%If for every $\epsilon>0$, there is an open cover $\mathcal{U}$ of diameter
% less than $\epsilon$ such that
% $$\liminf_{n\to\infty}-\frac1n\log\delta_n(f,\mathcal{U})=0$$
% Then $h(f)=0$.
%\end{corollary}

\subsection{Hausdorff dimension and Bowen's definition of topological entropy}
This part has been inspired by \cite{Misiu}. Instead of box dimension we
now discuss Hausdorff dimension. 
By considering Lebesgue numbers we obtain an inequality relating Hausdorff
dimension and topological entropy, which implies \cite[Theorem
2.1]{Misiu} and \cite[Theorem 2]{DZG} (See Corollary \ref{Lip}).

%We mostly follow the setting of \cite{Misiu}.
Recall Bowen's definition of topological entropy \cite{Bowen} that is equivalent
to those we have discussed as the space $X$ is assumed to be compact.
Let $\mathcal{U}$ be a finite open cover of $X$. For a set $B\subset X$ we
 write $B\succ\mathcal{U}$ if $B$ is
contained in some element of $\mathcal{U}$.
Let $n_{f,\mathcal{U}}(B)$ be the largest nonnegative integer $n$
such that $f^k(B)\succ \mathcal{U}$ for $k = 0, 1,\dots,n-1$. If $B\nprec\mathcal{U}$
then $n_{f,\mathcal{U}}(B) = 0$ and if
$f^k(B)\succ\mathcal{U}$ for all $k$ then $n_{f,\mathcal{U}}(B)=\infty$.
Now we set $\mathrm{diam}_\mathcal{U}(B) = \exp(-n_{f,\mathcal{U}}(B))$.
If $\mathcal{B}$ is also a cover of $X$, we set
$$\mathrm{diam}_\mathcal{U}(\mathcal{B}) = \sup_{B\in\mathcal{B}}
\mathrm{diam}\mathcal{U}(B)$$
and for any real number $\lambda$,
$$D_{\mathcal{U}}(\mathcal{B}, \lambda)=\sum_{B\in\mathcal{B}}
(\mathrm{diam}_\mathcal{U}(B))^\lambda.$$
Then there is a number $h_\mathcal{U}(f)$ such that
$$\mu_{\mathcal{U},\lambda}(X)=\lim_{\epsilon\to 0}
\inf\{D_\mathcal{U}(\mathcal{B}, \lambda)| \mathcal{B}\text{ is a cover of
} X\text{ and }\mathrm{diam}_\mathcal{U}(\mathcal{B}) < \epsilon\}$$
is $\infty$ for $\lambda<h_\mathcal{U}(f)$ and $0$ for $\lambda>h_\mathcal{U}(f)$.
As showed in \cite{Bowen}, we have
$$h(f)=\sup\{h_\mathcal{U}(f)| \mathcal{U}\text{ is a finite open cover of
} X\}.$$

The classical Hausdorff measure is defined as
$$\mu_\lambda(X)=\lim_{\epsilon\to 0}\inf\{\sum_{U\in\mathcal{U}}(\diam(U))^\lambda|
\mathcal{U} \text{ is a cover of } X \text{ and } \diam(\mathcal{U})<\epsilon\}.$$
We know the Hausdorff dimension of $X$ is a number $\dimh(X)$ such that $\mu_\lambda(X)=\infty$
for $\lambda<\dimh(X)$ and $\mu_\lambda(X)=0$ for $\lambda>\dimh(X)$.

\begin{theorem}
$$\dimh(X)\cdot h_L^+(f,\mathcal{U})\ge h_\mathcal{U}(f).$$
\end{theorem}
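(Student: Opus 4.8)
The plan is to prove that $\mu_{\mathcal{U},\lambda'}(X)=0$ for every $\lambda'>\dimh(X)\cdot h_L^+(f,\mathcal{U})$; by the characterization of $h_{\mathcal{U}}(f)$ recalled above, this yields $h_{\mathcal{U}}(f)\le\lambda'$ for all such $\lambda'$, hence $h_{\mathcal{U}}(f)\le\dimh(X)\cdot h_L^+(f,\mathcal{U})$. (I assume $\dimh(X)<\infty$; otherwise the inequality is vacuous unless $h_L^+(f,\mathcal{U})=0$, which the argument below still covers on taking the auxiliary constant $\eta$ arbitrarily small.) Throughout, $\delta_n$ abbreviates $\delta_n(f,\mathcal{U})=\delta(\mathcal{U}_f^n)$, which is a positive number for each $n$ by Theorem 2.1.

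The first real step is an elementary translation between the classical diameter and Bowen's $\diam_{\mathcal{U}}$. If a set $B\subset X$ has $\diam(B)<\delta_n$, then by the set form of the Lebesgue covering lemma (the Remark following Theorem 2.1, equivalently Lemma \ref{shk}) $B$ is contained in some element $\bigcap_{k=0}^{n-1}f^{-k}(U_{i_k})$ of $\mathcal{U}_f^n$, so $f^k(B)\subset U_{i_k}\in\mathcal{U}$ for $0\le k\le n-1$, and therefore $n_{f,\mathcal{U}}(B)\ge n$. Taking the contrapositive and writing $m=n_{f,\mathcal{U}}(B)$ (the case $m=\infty$ is harmless, since then $\diam_{\mathcal{U}}(B)=0$), one obtains $\diam(B)\ge\delta_{m+1}$.

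Next I would convert the decay hypothesis into a pointwise estimate and then assemble the covers. Fix $\eta>0$ small enough that $\lambda'>\dimh(X)\,(h_L^+(f,\mathcal{U})+\eta)$, and set $\lambda=\lambda'/(h_L^+(f,\mathcal{U})+\eta)$, so $\lambda>\dimh(X)$. Because $h_L^+(f,\mathcal{U})=\limsup_n-\tfrac1n\log\delta_n$ is a \emph{limit superior}, there is $N$ with $\delta_n>e^{-n(h_L^+(f,\mathcal{U})+\eta)}$ for all $n\ge N$. Hence, for any $B$ with $m=n_{f,\mathcal{U}}(B)\ge N-1$, the inequality $\diam(B)\ge\delta_{m+1}>e^{-(m+1)(h_L^+(f,\mathcal{U})+\eta)}$ rearranges to $\diam_{\mathcal{U}}(B)=e^{-m}<e\,(\diam B)^{1/(h_L^+(f,\mathcal{U})+\eta)}$, and therefore $(\diam_{\mathcal{U}}(B))^{\lambda'}<e^{\lambda'}(\diam B)^{\lambda}$. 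Now, given $\epsilon,\epsilon_2>0$, choose $n_0\ge N-1$ with $e^{-n_0}\le\epsilon$; since $\lambda>\dimh(X)$ we have $\mu_\lambda(X)=0$, so there is a cover $\mathcal{B}$ of $X$ with $\diam(\mathcal{B})<\delta_{n_0+1}$ and $\sum_{B\in\mathcal{B}}(\diam B)^\lambda<e^{-\lambda'}\epsilon_2$. Every $B\in\mathcal{B}$ then satisfies $\diam(B)<\delta_{n_0+1}$, forcing $n_{f,\mathcal{U}}(B)\ge n_0+1>N-1$; consequently $\diam_{\mathcal{U}}(B)\le e^{-(n_0+1)}<\epsilon$, so $\diam_{\mathcal{U}}(\mathcal{B})<\epsilon$, and $D_{\mathcal{U}}(\mathcal{B},\lambda')<e^{\lambda'}\sum_{B\in\mathcal{B}}(\diam B)^\lambda<\epsilon_2$. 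As $\epsilon$ and $\epsilon_2$ are arbitrary, $\mu_{\mathcal{U},\lambda'}(X)=0$, which completes the argument.

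The delicate point, and the step I expect to require the most care, is the pointwise comparison $\diam_{\mathcal{U}}(B)<e\,(\diam B)^{1/(h_L^+(f,\mathcal{U})+\eta)}$: the exponent $m=n_{f,\mathcal{U}}(B)$ varies with $B$, so one genuinely needs $\delta_n>e^{-n(h_L^+(f,\mathcal{U})+\eta)}$ for \emph{all} large $n$ simultaneously — which is exactly what the limsup supplies and why the analogous statement fails for $h_L^-$ — together with the observation that restricting to covers $\mathcal{B}$ of small classical diameter automatically forces $n_{f,\mathcal{U}}(B)$ to be uniformly large, so that sets $B$ with small $n_{f,\mathcal{U}}(B)$ (where no power comparison is available) never enter the computation of $\mu_{\mathcal{U},\lambda'}(X)$.
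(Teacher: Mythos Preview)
Your proof is correct and follows essentially the same route as the paper's: both arguments use that $\diam(B)<\delta_n$ forces $n_{f,\mathcal{U}}(B)\ge n$, combine this with the limsup bound $\delta_n>e^{-nK}$ (for $K>h_L^+(f,\mathcal{U})$) to compare $\diam_{\mathcal{U}}(B)$ with a power of $\diam(B)$, and then feed covers witnessing $\mu_\lambda(X)=0$ for $\lambda>\dimh(X)$ into $D_{\mathcal{U}}(\cdot,\lambda K)$. The only cosmetic differences are that the paper parametrizes by $K$ and $\lambda$ separately (obtaining the clean estimate $\diam_{\mathcal{U}}(B)<(\diam B)^{1/K}$ without your extra factor $e$), whereas you fix $\lambda'$ first and back out $\eta$ and $\lambda$; your version is in fact a bit more explicit about why the covers $\mathcal{B}$ can be chosen so that every $B$ has $n_{f,\mathcal{U}}(B)$ large.
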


\begin{proof}
Take $K>h_L^+(f,\mathcal{U})\ge 0$. Then there is $n_0$ such that for every $n>n_0$,
$$-\frac1{n-1}\log\delta_n(f,\mathcal{U})<K.$$
For $B\in X$, if 
\begin{equation}\label{dia}
\diam(B)\le\exp -K(n-1)<\delta_n(f,\mathcal{U})
\end{equation}
then $n_{f,\mathcal{U}}(B)\ge n$ since $B$ is contained in $\mathcal{U}_f^n$.
(\ref{dia}) is satisfied for
$$n\le-\log(\diam(B))/K+1.$$
So
\begin{equation}\label{nfa}
n_{f,\mathcal{U}}(B)>-\log(\diam(B))/K\;\text{and}\;\mathrm{diam}_\mathcal{U}(B)<(\diam(B))^{1/K}.
\end{equation}

Now fix $\lambda>\dimh(X)$. By the definition of Hausdorff dimension, $\mu_\lambda(X)=0$.
For every
$\epsilon>0$ small enough (much smaller than $\delta_{n_0}(f,\mathcal{U})$) and
every small $\gamma>0$, there is a cover $\mathcal{B}$ such that $\diam(\mathcal{B})<\epsilon$
and
$$\sum_{B\in\mathcal{B}}(\diam(B))^\lambda<\gamma.$$
By (\ref{nfa}) we have
$$D_{\mathcal{U}}(\mathcal{B}, \lambda K)=\sum_{B\in\mathcal{B}}
(\mathrm{diam}_\mathcal{U}(B))^{\lambda K}<\sum_{B\in\mathcal{B}}(\diam(B))^\lambda<\gamma$$
while $\mathrm{diam}_\mathcal{U}(\mathcal{B})<\epsilon^{1/K}$.
This implies that $\mu_{\mathcal{U},\lambda K}(X)=0$.
Hence $\lambda K>h_\mathcal{U}(f)$, whenever $\lambda>\dimh(X)$
and $K>h_L^+(\mathcal{U})$. So $\dimh(X)\cdot h_L^+(\mathcal{U})\ge h_\mathcal{U}(f)$.
\end{proof}

\begin{remark}
Similarly, for $Y\subset X$ we can define $\mu_{\mathcal{U},\lambda}(Y)$,
$h_\mathcal{U}(f,Y)$,
$\mu_{\lambda}(Y)$ and $\dimh(Y)$ (as in \cite{Misiu}). It is not difficult
to show that if $\mathcal{U}$ is a finite open cover of $Y$, then
$$\dimh(Y)\cdot h_L^+(f,\mathcal{U})\ge h_\mathcal{U}(f,Y).$$
\end{remark}

%\begin{remark}
%By Proposition \ref{eqdef},
%for every finite open cover $\mathcal{U}$, any $\epsilon>0$ and $n_0>0$,
%there is $n_1>n_0$ such that Lebesgue number of $f^{-n_1}(\mathcal{U})$
%is at most $$\exp n_1(-\frac{ h_\mathcal{U}(f)}{\dimh(X)}+\epsilon).$$
%\end{remark}

\begin{corollary}\label{hausleb}
$$\dimh(X)\cdot h_L^+(f)\ge h(f).$$
\end{corollary}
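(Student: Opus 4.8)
The plan is to deduce this corollary from the preceding theorem, $\dimh(X)\cdot h_L^+(f,\mathcal{U})\ge h_\mathcal{U}(f)$, by taking a supremum over all finite open covers $\mathcal{U}$ of $X$. The right-hand side behaves nicely under this supremum: Bowen's theorem, quoted in the excerpt, gives $h(f)=\sup_{\mathcal{U}} h_\mathcal{U}(f)$, where the supremum is over finite open covers. So it suffices to control the left-hand side.

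First I would fix an arbitrary finite open cover $\mathcal{U}$. By the theorem, $\dimh(X)\cdot h_L^+(f,\mathcal{U})\ge h_\mathcal{U}(f)$. By Definition \ref{Lentropy}, $h_L^+(f)\ge h_L^+(f,\mathcal{U})$. Hence $\dimh(X)\cdot h_L^+(f)\ge \dimh(X)\cdot h_L^+(f,\mathcal{U})\ge h_\mathcal{U}(f)$. Since this holds for every finite open cover $\mathcal{U}$, taking the supremum over all such $\mathcal{U}$ on the right gives $\dimh(X)\cdot h_L^+(f)\ge \sup_{\mathcal{U}} h_\mathcal{U}(f)=h(f)$, which is the claim.

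The one point that needs a word of care is the sign/positivity issue in multiplying the inequality $h_L^+(f)\ge h_L^+(f,\mathcal{U})$ through by $\dimh(X)$: this is valid because $\dimh(X)\ge 0$ always. (If $\dimh(X)=0$ then the corollary asserts $h(f)\le 0$; but in that case $h_\mathcal{U}(f)=0$ for every $\mathcal{U}$ by the theorem, since $h_L^+(f,\mathcal{U})<\infty$ as Lebesgue numbers are bounded by the diameter of $X$, so $h(f)=0$ and the inequality holds with equality.) There is no real obstacle here; the corollary is a formal consequence of the theorem together with Bowen's characterization of $h(f)$, and the proof is a one-line supremum argument. I would simply write:

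\begin{proof}
For every finite open cover $\mathcal{U}$ of $X$, the previous theorem gives
$$\dimh(X)\cdot h_L^+(f)\ge \dimh(X)\cdot h_L^+(f,\mathcal{U})\ge h_\mathcal{U}(f).$$
Taking the supremum over all finite open covers $\mathcal{U}$ and using Bowen's identity $h(f)=\sup_{\mathcal{U}} h_\mathcal{U}(f)$, we obtain $\dimh(X)\cdot h_L^+(f)\ge h(f)$.
\end{proof}
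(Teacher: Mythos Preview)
Your argument is correct and is exactly the intended one: the paper states the corollary without proof, and it follows immediately from the preceding theorem by taking the supremum over finite open covers $\mathcal{U}$, using $h_L^+(f)\ge h_L^+(f,\mathcal{U})$ and Bowen's identity $h(f)=\sup_\mathcal{U} h_\mathcal{U}(f)$.

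One small slip in your parenthetical discussion: the bound $\delta_n\le\diam(X)$ gives $-\tfrac1n\log\delta_n\ge -\tfrac1n\log\diam(X)\to 0$, i.e.\ $h_L^+(f,\mathcal{U})\ge 0$, not $h_L^+(f,\mathcal{U})<\infty$; indeed Example~6.1 in the paper exhibits $h_L^*(f)=\infty$. This does not affect your main proof, and the paper itself does not single out the $\dimh(X)=0$ edge case.
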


\section{Lipschitz maps}
We have shown that $h_L^-$ and $h_L^+$ provide upper estimates of topological
entropy.
Now we show that these numbers are bounded for Lipschitz
maps. Recall that a continous map $f$ is Lipschitz with constant $L(f)>0$
if for every $x,y\in X$, $d(f(x),f(y))\le L(f)\cdot d(x,y)$.
Here we assume that $L(f)$ to be the smallest one among such numbers.

\begin{theorem}\label{LLL}
If $f$ is Lipschitz with constant $L(f)$, then for every finite open cover
$\mathcal{U}$, $h_L^+(f,\mathcal{U})\le\max\{\log L(f),0\}$.
\end{theorem}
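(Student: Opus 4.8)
The plan is to bound the decay rate of the Lebesgue numbers $\delta_n(f,\mathcal U)$ from below using the Lipschitz estimate, which controls how much $f$ can expand distances. The key observation is that $\delta_n(f,\mathcal U)=\min_{0\le k\le n-1}\delta(f^{-k}(\mathcal U))$ by Corollary \ref{mindel}, so it suffices to control $\delta(f^{-k}(\mathcal U))$ from below in terms of $\delta(\mathcal U)$ and $L(f)$. First I would treat the case $L(f)\le 1$: then $f$ is nonexpanding, so $f^{-1}(\mathcal U)$ is at least as ``coarse'' as $\mathcal U$ in the relevant sense, and one expects $\delta(f^{-k}(\mathcal U))\ge\delta(\mathcal U)$ for all $k$, giving $\delta_n(f,\mathcal U)\ge\delta(\mathcal U)$ and hence $h_L^+(f,\mathcal U)=0=\max\{\log L(f),0\}$. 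The substantive case is $L(f)>1$, where I aim to show $\delta(f^{-k}(\mathcal U))\ge L(f)^{-k}\delta(\mathcal U)$, so that $\delta_n(f,\mathcal U)\ge L(f)^{-(n-1)}\delta(\mathcal U)$, whence $-\frac1n\log\delta_n(f,\mathcal U)\le\frac{n-1}{n}\log L(f)-\frac1n\log\delta(\mathcal U)\to\log L(f)$.

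The heart of the matter is the inequality $\delta(f^{-1}(\mathcal V))\ge L(f)^{-1}\delta(\mathcal V)$ for an arbitrary open cover $\mathcal V$; the general $k$ case then follows by induction since $f^{-k}(\mathcal U)=f^{-1}(f^{-(k-1)}(\mathcal U))$. To prove this, fix $x\in X$ and let $r=L(f)^{-1}\delta(\mathcal V)$; I want to show the ball $B(x,r)$ is contained in some element of $f^{-1}(\mathcal V)$. Since $f$ is Lipschitz, $f(B(x,r))\subset B(f(x),L(f)r)=B(f(x),\delta(\mathcal V))$, and by definition of $\delta(\mathcal V)$ this ball lies in some $V\in\mathcal V$. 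Hence $B(x,r)\subset f^{-1}(V)\in f^{-1}(\mathcal V)$, which gives $\delta(f^{-1}(\mathcal V))\ge r=L(f)^{-1}\delta(\mathcal V)$, as desired. (One should note $\delta(f^{-1}(\mathcal V))$ makes sense here: $f^{-1}(\mathcal V)$ is an open cover of the compact space $X$, so Theorem~1.1 applies; if $f^{-1}(V)=\emptyset$ for some $V$ this only helps.)

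Assembling these: in the case $L(f)>1$, induction on $k$ using the one-step inequality yields $\delta(f^{-k}(\mathcal U))\ge L(f)^{-k}\delta(\mathcal U)$, and then Corollary~\ref{mindel} gives $\delta_n(f,\mathcal U)=\min_{0\le k\le n-1}\delta(f^{-k}(\mathcal U))\ge L(f)^{-(n-1)}\delta(\mathcal U)$ since $L(f)^{-k}$ is decreasing in $k$. Taking $-\frac1n\log$ and letting $n\to\infty$ shows $h_L^+(f,\mathcal U)\le\log L(f)$. In the case $L(f)\le 1$, the same one-step argument with $r=\delta(\mathcal V)$ (using $L(f)r\le r$, so $f(B(x,r))\subset B(f(x),\delta(\mathcal V))$) gives $\delta(f^{-1}(\mathcal V))\ge\delta(\mathcal V)$, hence $\delta_n(f,\mathcal U)\ge\delta(\mathcal U)$ and $h_L^+(f,\mathcal U)\le 0$. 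Combining the two cases gives $h_L^+(f,\mathcal U)\le\max\{\log L(f),0\}$. I do not anticipate a serious obstacle; the only mild subtlety is making sure the ball-containment step is phrased correctly (open balls, strict versus non-strict radii) and handling the degenerate situations $X\in\mathcal V$ or empty preimages, all of which only make the bound easier.
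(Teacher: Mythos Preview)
Your proof is correct and essentially the same as the paper's: both use the Lipschitz estimate to show $\delta_n(f,\mathcal U)\ge \delta(\mathcal U)\cdot L^{-(n-1)}$ (with $L=\max\{L(f),1\}$) and then take the $\limsup$. The only cosmetic difference is that you route through Corollary~\ref{mindel} and a one-step inequality $\delta(f^{-1}(\mathcal V))\ge L^{-1}\delta(\mathcal V)$ plus induction, whereas the paper verifies directly that a ball of radius $\delta(\mathcal U)L^{-(n-1)}$ lies in some element of $\mathcal U_f^n$.
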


\begin{proof}
Let $L=\max\{L(f),0\}$.
For every $x\in X$ and every $y\in B(x,\delta(\mathcal{U})\cdot L^{-(n-1)})$,
$$d(f^j(x),f^j(y))\le L^j\cdot d(x,y)\le \delta(\mathcal{U})$$
for $j=0,1,\dots,n-1$.
This implies that
$$f^j(B(x,\delta(\mathcal{U})\cdot L^{-(n-1)}))\subset B(f^j(x),\delta(\mathcal{U}))\subset
U_j$$
for some $U_j\in\mathcal{U}$, $j=0,1,\dots,n-1$. So $\delta(\mathcal{U}_f^n,x)\ge\delta(\mathcal{U})\cdot
L^{-(n-1)}$ for every $x\in X$, hence $\delta_n\ge\delta(\mathcal{U})\cdot L^{-(n-1)}$.
$$h_L^+(f,\mathcal{U})=\limsup_{n\to\infty}-\frac1n\log\delta_n\le\limsup_{n\to\infty}-\frac1n\log(\delta(\mathcal{U})\cdot
L^{-(n-1)})=\log L.$$
\end{proof}

\begin{corollary}\label{Lip}\textnormal{(see also \cite{DZG}\cite{Misiu})}
If $f$ is Lipschitz with constant $L(f)>1$, then for every $Y\subset X$,
$$\dimh(Y)\cdot\log L(f)\ge {h(f,Y)}.$$
In particular,
$$\dimh(X)\cdot\log L(f)\ge h(f).$$
\end{corollary}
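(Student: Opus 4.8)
The plan is to combine Theorem~\ref{LLL} with the remark following the Hausdorff-dimension theorem, namely the relativized inequality $\dimh(Y)\cdot h_L^+(f,\mathcal{U})\ge h_\mathcal{U}(f,Y)$ valid for every finite open cover $\mathcal{U}$ of $Y$. The point is that $h(f,Y)=\sup_{\mathcal{U}}h_\mathcal{U}(f,Y)$, so it suffices to bound each $h_\mathcal{U}(f,Y)$.

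First I would fix an arbitrary finite open cover $\mathcal{U}$ of $Y$. By the relativized version of the Hausdorff-dimension theorem quoted in the remark, we have
$$\dimh(Y)\cdot h_L^+(f,\mathcal{U})\ge h_\mathcal{U}(f,Y).$$
Next I would invoke Theorem~\ref{LLL}: since $f$ is Lipschitz with constant $L(f)>1$, for this cover $h_L^+(f,\mathcal{U})\le\max\{\log L(f),0\}=\log L(f)$, the last equality because $L(f)>1$. Substituting gives
$$\dimh(Y)\cdot\log L(f)\ge\dimh(Y)\cdot h_L^+(f,\mathcal{U})\ge h_\mathcal{U}(f,Y).$$
Since this holds for every finite open cover $\mathcal{U}$ of $Y$, taking the supremum over all such $\mathcal{U}$ yields $\dimh(Y)\cdot\log L(f)\ge h(f,Y)$. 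Setting $Y=X$ recovers the second displayed inequality as a special case.

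The only subtlety worth checking is that $\dimh(Y)$ is finite, or at least that the inequality is read correctly when it is infinite; if $\dimh(Y)=\infty$ the inequality is vacuous, and if $\dimh(Y)<\infty$ the argument above is literally valid since $\log L(f)>0$ keeps the product well-behaved. I expect the main (minor) obstacle to be purely bookkeeping: making sure the relativized inequality in the remark is stated with sufficient precision—in particular that $\mathcal{U}$ ranges over covers of $Y$ rather than of $X$, and that $h(f,Y)$ in Bowen's sense is indeed the supremum of $h_\mathcal{U}(f,Y)$ over finite open covers, which is exactly how Bowen's topological entropy of a subset is defined in \cite{Bowen}. Given the machinery already in place, no further estimates are needed.
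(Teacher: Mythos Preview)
Your proposal is correct and matches the paper's intended argument: the corollary is obtained by combining Theorem~\ref{LLL} (which bounds $h_L^+(f,\mathcal{U})$ by $\log L(f)$) with the Hausdorff-dimension inequality $\dimh(Y)\cdot h_L^+(f,\mathcal{U})\ge h_\mathcal{U}(f,Y)$ from the remark, and then taking the supremum over finite open covers. The special case $Y=X$ is exactly Corollary~\ref{hausleb} combined with Theorem~\ref{LLL}.
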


\begin{remark}
We note (thanks to Anatole Katok) long before Bowen's definition of topological entropy was introduced, the weaker result involving the box dimension (see, e.g.\cite[Theorem
3.2.9]{KB}) had been proved by Kushnirenko:
$$\dimb(X)\cdot\max\{\log L(f),0\}\ge h(f).$$
\end{remark}

\begin{corollary}\label{hauslip}
If $f$ is Lipschitz, let $$l(f)=\inf\{\frac1n\log L(f^n)|{n\ge 1}\}.$$
Then for every finite open cover $\mathcal{U}$, $h_L^+(f,\mathcal{U})\le\max\{l(f),0\}$.

If $l(f)>0$, then for every $Y\subset X$,
$$\dimh(Y)\cdot l(f)\ge h(f,Y).$$
\end{corollary}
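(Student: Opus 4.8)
The plan is to reduce Corollary~\ref{hauslip} to the already-established facts about $h_L^+$ together with the multiplicativity statement in Proposition~\ref{multi}. The first assertion asks us to bound $h_L^+(f,\calu)$ by $\max\{l(f),0\}$, and the natural route is to pass to a high iterate $f^n$: since $f^n$ is Lipschitz with constant $L(f^n)$, Theorem~\ref{LLL} gives $h_L^+(f^n,\calv)\le\max\{\log L(f^n),0\}$ for every finite open cover $\calv$. Applying this to $\calv=\calu$ and using Proposition~\ref{multi} in the form $h_L^+(f^n,\calu)=n\,h_L^+(f)$ — wait, more carefully, Proposition~\ref{multi} is stated for $h_L^*(f)$, not for a fixed cover, so I would instead use it after taking suprema, or invoke the cover-level inequality $h_L^+(f^n,\calu_f^n)\ge n\,h_L^+(f,\calu)$ established in the proof of Proposition~\ref{multi}. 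Since $\calu_f^n$ is again a finite open cover of $X$, Theorem~\ref{LLL} applied to $f^n$ and $\calu_f^n$ yields $n\,h_L^+(f,\calu)\le h_L^+(f^n,\calu_f^n)\le\max\{\log L(f^n),0\}$, hence $h_L^+(f,\calu)\le\max\{\tfrac1n\log L(f^n),0\}$ for every $n\ge 1$. Taking the infimum over $n$ gives $h_L^+(f,\calu)\le\max\{l(f),0\}$, which is the first claim.

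For the second assertion, suppose $l(f)>0$. Then for every finite open cover $\calu$ of $Y$ we have, by the first part combined with the remark following the theorem $\dimh(Y)\cdot h_L^+(f,\calu)\ge h_\calu(f,Y)$,
\[
\dimh(Y)\cdot l(f)\ \ge\ \dimh(Y)\cdot h_L^+(f,\calu)\ \ge\ h_\calu(f,Y).
\]
Taking the supremum over all finite open covers $\calu$ of $Y$ and using Bowen's characterization $h(f,Y)=\sup_\calu h_\calu(f,Y)$ gives $\dimh(Y)\cdot l(f)\ge h(f,Y)$, as desired.

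One point that needs a little care: the remark that provides $\dimh(Y)\cdot h_L^+(f,\calu)\ge h_\calu(f,Y)$ is asserted there without a full proof (``It is not difficult to show''), so strictly speaking one should either take that remark as given or reprove the relative version of the Hausdorff-dimension theorem for the subset $Y$; I would simply cite it, since the excerpt invites us to. A second subtlety is the interaction between $\max\{\cdot,0\}$ and the infimum defining $l(f)$: one must note $\inf_n\max\{\tfrac1n\log L(f^n),0\}=\max\{\inf_n\tfrac1n\log L(f^n),0\}=\max\{l(f),0\}$, which holds because $\max\{t,0\}$ is nondecreasing and continuous in $t$. Neither of these is a real obstacle. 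The main (and essentially only) conceptual step is the iterate-and-refine trick in the first paragraph — recognizing that passing from $\calu$ to $\calu_f^n$ is exactly what is needed to marry Theorem~\ref{LLL} for $f^n$ with the cover-level inequality inside Proposition~\ref{multi}; once that is seen, everything else is bookkeeping.
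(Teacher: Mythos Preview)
Your proof is correct and is essentially the argument the paper intends: the corollary is stated without proof, and the natural derivation is precisely your iterate-and-refine step---apply Theorem~\ref{LLL} to $f^n$ and the cover $\calu_f^n$, use the equality $\delta_m(f^n,\calu_f^n)=\delta_{mn}(f,\calu)$ from the proof of Proposition~\ref{multi} to get $n\,h_L^+(f,\calu)\le\max\{\log L(f^n),0\}$, and then take the infimum over $n$. The second part then follows exactly as you say from the remark after the Hausdorff-dimension theorem together with Bowen's characterization of $h(f,Y)$.
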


\begin{remark}
If $f$ is Lipschitz ($L(f)<\infty$), then the sequence $\{\log L(f^n)\}_{1\le\
n\le\infty}$ is sub-additive. In this case
$$l(f)=\lim_{n\to\infty}\frac1n\log L(f^n).$$

In general, $h_L^-(f)$, $h_L^+(f)$ and $l(f)$ may be different from each
other. Some examples will be discussed in the next section.
\end{remark}

\begin{theorem}
$h_L^*(f)$ is invariant under bi-Lipschitz conjugacy.
\end{theorem}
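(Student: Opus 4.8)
The plan is to show that $h_L^*$ transfers across a bi-Lipschitz conjugacy by tracking how Lebesgue numbers of open covers behave under a bi-Lipschitz homeomorphism. Suppose $\phi\colon (X,d_X)\to (Y,d_Y)$ is a homeomorphism conjugating $f$ to $g$, i.e. $\phi\circ f=g\circ\phi$, and suppose there are constants $0<c\le C<\infty$ with $c\cdot d_X(x,x')\le d_Y(\phi(x),\phi(x'))\le C\cdot d_X(x,x')$ for all $x,x'\in X$. I would first record the basic geometric fact: for any subset $A\subset X$ we have $c\cdot\diam(A)\le\diam(\phi(A))\le C\cdot\diam(A)$, and for any $x\in X$ and $r>0$, $B_Y(\phi(x),cr)\subset\phi(B_X(x,r))\subset B_Y(\phi(x),Cr)$. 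The strategy is then to compare $\delta_n(f,\calu)$ with $\delta_n(g,\phi(\calu))$ for an arbitrary finite open cover $\calu$ of $X$, where $\phi(\calu)=\{\phi(U):U\in\calu\}$ is a finite open cover of $Y$.

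The key step is the inequality $\delta(\phi(\calu))\ge c\cdot\delta(\calu)$ (and symmetrically $\delta(\calu)\ge c\,C^{-1}\cdot\delta(\phi(\calu))$ by applying the same bound to $\phi^{-1}$, whose bi-Lipschitz constants are $1/C$ and $1/c$). Indeed, if $B_Y(y,cr)$ is a ball of radius $c\,\delta(\calu)$ in $Y$, write $y=\phi(x)$; then $\phi^{-1}(B_Y(y,cr))\subset B_X(x,r)$ by the containment above, and if $r\le\delta(\calu)$ this is contained in some $U\in\calu$, so $B_Y(y,cr)\subset\phi(U)\in\phi(\calu)$. Hence $\delta(\phi(\calu))\ge c\cdot\delta(\calu)$. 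Next I would upgrade this to the dynamical refinements: since $\phi$ conjugates $f$ to $g$, we have $\phi(f^{-k}(U))=g^{-k}(\phi(U))$ for every $k$, so $\phi(\calu_f^n)=(\phi(\calu))_g^n$ as covers of $Y$; applying the static inequality to the cover $\calu_f^n$ gives
\begin{equation*}
c\cdot\delta_n(f,\calu)\le\delta_n(g,\phi(\calu))\le (C/c)\cdot\delta_n(f,\calu).
\end{equation*}
Taking $-\frac1n\log$ of all three terms, the multiplicative constants contribute $-\frac1n\log c$-type terms which vanish in the limit, so $h_L^*(g,\phi(\calu))=h_L^*(f,\calu)$ for either the $\liminf$ or the $\limsup$ version.

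Finally I would take suprema. Since $\calu\mapsto\phi(\calu)$ is a bijection between finite open covers of $X$ and finite open covers of $Y$ (with inverse $\calv\mapsto\phi^{-1}(\calv)$, which is again bi-Lipschitz), the identity $h_L^*(g,\phi(\calu))=h_L^*(f,\calu)$ for all $\calu$ yields $h_L^*(g)=\sup_\calu h_L^*(g,\phi(\calu))=\sup_\calu h_L^*(f,\calu)=h_L^*(f)$, which is the assertion. I do not anticipate a serious obstacle here; the one point requiring a little care is making sure the image $\phi(U)$ of an open set is open (true since $\phi$ is a homeomorphism) and that $\phi(\calu)$ is genuinely a \emph{finite open cover} so that it is admissible in the supremum defining $h_L^*(g)$ — both are immediate, but worth stating. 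A minor subtlety is that the problem statement does not spell out whether a bi-Lipschitz conjugacy is required to be bi-Lipschitz in both directions; since a bi-Lipschitz homeomorphism has a bi-Lipschitz inverse with reciprocal constants, this is automatic, and the proof above uses exactly that symmetry.
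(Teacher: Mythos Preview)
Your proof is correct and follows essentially the same route as the paper: compare $\delta(\phi(\calu))$ with $\delta(\calu)$ via a ball--containment argument, use the conjugacy to identify $\phi(\calu_f^n)$ with $(\phi(\calu))_g^n$, then pass to limits and take suprema over covers. A harmless slip: the upper bound should read $\delta_n(g,\phi(\calu))\le C\cdot\delta_n(f,\calu)$ rather than $(C/c)\cdot\delta_n(f,\calu)$, but any positive constant independent of $n$ suffices for the limit argument.
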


\begin{proof}
Let $H$ be a bi-Lipschitz conjugacy between $f$ on $X$ and $g$ on $Y$.
For a finite open cover $\mathcal{U}$ of $X$ and every $x\in X$, there is
$U\in\mathcal{U}$ such that 
\begin{equation*}
U\supset B(x,\delta(\mathcal{U}))\supset H^{-1}(B(H(x),\delta(\mathcal{U})\cdot
L(H^{-1})^{-1}).
\end{equation*}
Then
$$B(H(x), \delta(\mathcal{U})\cdot L(H^{-1})^{-1})\subset H(U)\in H(\mathcal{U}).$$
As $H$ is a homeomorphism, this implies 
\begin{equation}\label{conj}
\delta(H(\mathcal{U}))\ge\delta(\mathcal{U})\cdot L(H^{-1})^{-1}.
\end{equation}
Moreover, $H$ is a conjugacy, $$g^{-n}(H(\mathcal{U}))=H(f^{-n}(\mathcal{U})).$$
Replace $\mathcal{U}$ by $g^{-n}(H(\mathcal{U}))$ in (\ref{conj}), then 
$$\delta(g^{-n}(H(\mathcal{U})))\ge\delta(H(f^{-n}(\mathcal{U})))\cdot
L(H^{-1})^{-1}$$
and hence
$$\delta_n(g,H(\mathcal{U}))\ge \delta_n(f,\mathcal{U})\cdot L(H^{-1})^{-1}.$$
Taking the upper limit we have
$h_L^*(f)\ge h_L^*(g)$. The other direction is the same.
\end{proof}

\begin{remark}
$h_L^*$ depends on the metric chosen and is
not a topological invariant. By the above theorem each of them is the same
for strong equivalent metrics ($C_1d(x,y)<d'(x,y)<C_2d(x,y)$). 
Box dimension and Hausdorff dimension also depend on the metric. However,
the inequalities we obtained holds for any metric, making entropy, a topological
invariant, bounded by geometric numbers.
\end{remark}

\section{Examples}

To finish this paper, we put here several examples.

\begin{example}
Let $f:[0,1]\to[0,1]$ be defined by $f(x)=\sqrt x$. Then $h_L^*(f)=\infty$.

\begin{proof}
Take any finite open cover $\mathcal{U}$ with diameter less than $1/10$.
Then $1/2\notin U$
for every $U\in\mathcal{U}$ covering $0$.
$$\delta_n(f,\mathcal{U})\le\delta(f^{-n}(\mathcal{U}))\le |f^{-n}(1/2)-f^{-n}(0)|=2^{-2^n}.$$
So $h_L^*(f)=\infty$.
\end{proof}

This example 
shows that the numbers $h_L^*(f)$ may be unbounded even if $h(f)=0$. It also shows that $h_L^*(f^{-1})$ may be different from $h_L^*(f)$,
when $f$ is a homeomorphism (here $h_L^*(f^{-1})=0$).

\end{example}

\begin{example}
Let $f:[0,1]\to[0,1]$ be defined by $f(x)=\sqrt{1-x^2}$, then $f$ is not
Lipschitz. But as $f^2(x)=x$, we have
$h_L^*(f,\calu)=0$ for every open cover $\calu$. So $h_L^*(f)=0$.
\end{example}

The following examples show that even for a Lipschitz map $f$, in the inequality
$h_L^-(f)\le h_L^+(f)\le l(f)$, every relation may be strict. There
can be orbits with expanding rates $l(f)$
of arbitrary but finite length. So for any fixed open cover the decay will
no longer depend on $l(f)$ after finite iteration. These examples illustrate
this mechanism and in fact can be modified to be homeomorphisms.

\begin{example}\label{ex3}
Let us consider the compact space $$X=\{0\}\cup\{2^{-m}:m=1,2,\dots\}$$ with
the induced metric and topology from $\mathbb{R}$.
Define $f$ on $X$ by
$$f(x)=\begin{cases}x,&x=2^{-2^k}\text{ for some integer }k \text{ or } x=0;\\
2x, & \text{otherwise}.\end{cases}$$
It is easy to check that $f$ is continuous.

Clearly $L(f)=2$. For every large $n$ we have $f^n(2^{-2^{n+1}+1})=2^{-2^{n+1}+n+1}$ so that
$L(f^n)\ge 2^n$. So $l(f)=\log 2$.

On the other hand, denote by $$X_N=\{0\}\cup\{2^{-m}:m>N\}.$$
If $\mathcal{U}$ is an open cover of $X$, then there is
an element $U_0$ of $\mathcal{U}$ and $N_0>0$ such that $U_0\supset X_{N_0}$.
Let $k_0$ be an integer such that $2^{k_0-1}\le N_0<2^{k_0}$. Then for every
$n$, $f^{-n}(U_0)\supset X_{2^{k_0}}$. As $f^{-n}(\mathcal{U})$ is still
an open cover, we have $$\delta(f^{-n}(\mathcal{U}))\ge d(2^{-2^{k_0}}, 2^{-2^{k_0}+1}),$$
which is independent of $n$. So by Proposition \ref{eqdef}, $h_L^+(f)=0$.
\end{example}

\begin{example}\label{diffl}
Fix $a\ge b>1$. Consider the compact space
\begin{align*}
X_{a,b}=&\{0\}\cup \{a^{-m}:m=1,2,\dots\}\\
&\cup\{\frac{qa^{-2^p}}{a+q}:p\in\mathbb{N}, q\in\mathbb{N}\}\\
&\cup\{a^{-2^p}(1+ b^{q}):p\in\mathbb{N},
q\in\mathbb{Z}, 1+b^q<a\}
\end{align*}
with the induced metric and topology from $\mathbb{R}$. Define $f$ on $X$ by
$$f(x)=\begin{cases}
x,& x=2^{-2^k}\text{ for some integer }k \text{ or } x=0;\\
\min\{y\in X_{a,b}: y>x\}, & \text{otherwise}.
\end{cases}$$

It is easy to check that $f$ is a homeomorphism. Similar argument as Example
\ref{ex3} shows that $l(f)=\log a$ and $h_L^*(f)=\log b$.

This example also shows that $h_L^*(f)$ is far from a topological invariant
since all these functions are topologically conjugate for arbitrary values
of $a$ and $b$.
\end{example}

\begin{example}
In Example \ref{diffl} we can replace $X_{a,b}$ by
\begin{align*}
X_{a}=\{0\}\cup \{a^{-m}:m=1,2,\dots\}\cup\{a^{-2^p}(\frac{q}{a+q})^{\pm 1}:p\in\mathbb{N}, q\in\mathbb{N}\}.
\end{align*}
Then $l(f)=\log a$ and $h_L^*(f)=0$. This together with Example \ref{diffl}
imply that for every $l\ge 0$, the
collection of possible values
$$\{h_L^*(f): f \text{ is a homeomorphism and } l(f)=l\}$$
is the whole interval $[0,l]$.

Moreover, if we consider $g=(f,Id)$ on $X_{a}\times[0,1]$, then
we have $\dimh(X)=1, h(g)=0, l(g)=\log a$ and $h_L^*(g)=0$.
This shows that Corollary \ref{hausleb} can be a strictly better estimate than
Corollary \ref{hauslip} (also the results in \cite{DZG}\cite{Misiu}).
\end{example}

\begin{example}
Let $a>b>0$. Consider a sequence $\{s_n\}$ defined by
$$s_1=0, s_2=a, s_n=\begin{cases}s_{n-1}+a,& 2^{2^{2k-2}}\le n<2^{2^{2k-1}}\text{ for
some } k\in\mathbb{N};\\
s_{n-1}+b,& 2^{2^{2k-1}}\le n<2^{2^{2k}}\text{ for
some } k\in\mathbb{N}.
\end{cases}$$
Then 
$$\limsup_{n\to\infty}\frac1n s_n=a\text{ and }\liminf_{n\to\infty}\frac1n
s_n=b.$$

Let $t_n=\exp(-s_n)$, then
$$X=\{0,1\}\cup\{t_n:n\in\mathbb{N}\}$$
is a compact metric space with the induced metric from $\mathbb{R}$. Define
$$f(x)=\begin{cases}x, &x=0\text{ or }x=1;\\
t_{n-1},&x=t_n, n\in\mathbb{N}.
\end{cases}$$
Then $f$ is continuous. It is not difficult to see that
$h_L^+(f)=l(f)=a$ but $h_L^-(f)=b$.
We can even incorporate the idea of Example \ref{diffl} and obtain examples
of homeomorphisms
for which the strict inequality
$h_L^-(f)<h_L^+(f)<l(f)$ holds.
\end{example}

\begin{example}\label{exlbd}
This is the last example and it shows how the inequalities in Proposition
\ref{lbd} may be strict when $X_\infty$ is not a single point. Let $X=[0,1]\times
[0,1]/\sim$ be the cylinder parametrized
by $[0,1)\times [0,1]$, where $(0,y)\sim (1,y)$ for every $y\in[0,1]$.
Define
$$f(x,y)=\begin{cases} (2x,y), \text{if }2x<1;\\
(0,y), \text{if }2x\ge 1.
\end{cases}$$
Then $X_\infty=\{0\}\times [0,1]$.
For every $z_1\ne z_2\in\{0\}\times [0,1]$, $D(f^{-n}(z_1),f^{-n}(z_2))\ge
 d(z_1,z_2)$, hence 
 $$\sup_{x\ne y\in X_\infty}\limsup_{n\to\infty}\frac1n\log\frac{d(x,y)}{D(f^{-n}(x),f^{-n}(y))}=0.$$
But for every open cover $\calu$ of diameter sufficiently small, consider
the radius of the ball centered at $(0,0)$ that can be covered by $f^{-n}(\calu)$, we
see
$h_L^*(f,\calu)=\log
2$.
\end{example}

\section*{Acknowledgments} 
% You may incorporate your references as follows in your main tex file.
% Using BibTex is not recommended but can be handled.
This work is partially
supported by the Center of Dynamics and Geometry in Department of Mathematics,
Pennsylvania State University.  I would like to thank Anatole Katok and Michal Misiurewicz for
reading the drafts of the paper and providing numerous helpful
comments.

\medskip

\end{document}